\newtheorem{thm}{Theorem}[section]
\newtheorem{cor}[thm]{Corollary}
\newtheorem{prop}[thm]{Proposition}
\theoremstyle{definition}
\newtheorem{defn}[thm]{Definition}
\newtheorem{nim}[thm]{}
\newtheorem{rem}[thm]{Remark}
\newcommand{\Hom}{\mathrm{Hom}}
\newcommand{\End}{\operatorname{End}}
\newcommand{\Ind}{\operatorname{Ind}}
\newcommand{\Int}{\operatorname{Int}}
\newcommand{\Aut}{\operatorname{Aut}}
\newcommand{\Out}{\operatorname{Out}}
\newcommand{\Ker}{\operatorname{Ker}}
\newcommand{\h}{\operatorname{hU}}
\newcommand{\Br}{\operatorname{Br}}
\newcommand{\Od}{\mathcal{O}}
\begin{document}

\begin{frontmatter}

%% Title, authors and addresses

%% use the tnoteref command within \title for footnotes;
%% use the tnotetext command for the associated footnote;
%% use the fnref command within \author or \address for footnotes;
%% use the fntext command for the associated footnote;
%% use the corref command within \author for corresponding author footnotes;
%% use the cortext command for the associated footnote;
%% use the ead command for the email address,
%% and the form \ead[url] for the home page:
%%
%% \title{Title\tnoteref{label1}}
%% \tnotetext[label1]{}
%% \author{Name\corref{cor1}\fnref{label2}}
%% \ead{email address}
%% \ead[url]{home page}
%% \fntext[label2]{}
%% \cortext[cor1]{}
%% \address{Address\fnref{label3}}
%% \fntext[label3]{}

\title{Fusions and Clifford extensions}

%% use optional labels to link authors explicitly to addresses:
%% \author[label1,label2]{<author name>}
%% \address[label1]{<address>}
%% \address[label2]{<address>}
\author{Tiberiu Cocone\c t}
\ead{tiberiu.coconet@math.ubbcluj.ro}
\address{Faculty of Economics and Business Administration, Babe\c s-Bolyai University, Str. Teodor Mihali, nr.58-60 , 400591 Cluj-Napoca, Romania}
\author{Andrei Marcus}
\ead{andrei.marcus@math.ubbcluj.ro}
\address{Faculty of Mathematics, Babe\c s-Bolyai University, Cluj-Napoca, Romania}
\author{Constantin-Cosmin Todea\corref{cor1}}
\ead{Constantin.Todea@math.utcluj.ro}
\address{Department of Mathematics, Technical University of Cluj-Napoca, Str. G. Baritiu 25,  Cluj-Napoca 400027, Romania}

\cortext[cor1]{Corresponding author}

\begin{abstract}  We introduce $\bar G$-fusions of local pointed groups on a block extension $A=b\mathcal{O}G$, where $H$ is a normal subgroup of the finite group $G$, $\bar G=G/H$, and $b$ is a $G$-invariant block of $\mathcal{O}H$. We show that certain Clifford extensions associated to these pointed groups  are invariant under group graded basic Morita equivalences.
\end{abstract}

\begin{keyword}
finite groups, group algebras, blocks, defect groups, $G$-algebras, group graded algebras, pointed groups, fusion, basic Morita equivalence.
\MSC[2010] 20C20 \sep 16W50
\end{keyword}
%% MSC codes here, in the form: \MSC code \sep code

\end{frontmatter}
%% Start line numbering here if you want
% \linenumbers

%% main text
\section{Introduction}\label{sec1}

Pointed groups, organized as a category, have been introduced by Llu\'\i s Puig \cite{Puig2} and \cite{Puig2m}  as a useful tool in modular representation theory.  Puig also associated to a local pointed group on a block algebra certain $k^*$-groups, or twisted group algebras.

The aim of this paper is to extend these constructions to the case of blocks of normal subgroups, by identifying pointed groups with (isomorphism classes of) certain  indecomposable group graded bimodules (in a way similar to \cite[Appendix]{L}) or indecomposable projective modules, and obtain the $k^*$-groups as Clifford extensions associated to these objects.  Note that we will only deal with automorphisms of $p$-groups, and fusions will be regarded as  stabilizers of modules under a suitable action. In the end, we prove that these Clifford extensions are preserved by the group graded basic Morita equivalences introduced in \cite{CM}.

Let us briefly present our setting, which is the same as in \cite{CM} and \cite{CM2}. For any unexplained notions and results we refer to \cite{The}, \cite{Pu} and \cite{Ma}. Let $\mathcal{O}$ be a complete discrete valuation ring with residue field $k$  of  characteristic $p>0$, $k$ is  not assumed to be algebraically closed. Let $H$ be a normal subgroup of a finite group $G$, let $\bar{G}:=G/H$,  and we denote by $\bar{g}=gH$ an element in $\bar{G}$, where $g\in G$.

Let $b$ be a $G$-invariant block of $\mathcal{O}H$ (which is as a $p$-permutation  $G$-algebra), so $b$ remains a primitive idempotent in the $G$-fixed subalgebra $(\Od H)^G$. Then $A:=\Od Gb$ is a $\bar{G}$-graded $\mathcal{O}G$-interior  algebra (that is, the structural map $\Od G\rightarrow A$ is a homomorphism of $\bar{G}$-graded algebras) with identity component $B:=\Od Hb$. For any subgroup $L$ of $G$, the group algebra $\Od L$ is regarded as a $\bar{G}$-graded subalgebra of $\Od G$ in an obvious way, through the isomorphism $\bar{L}=LH/H \simeq L/L \cap H$.

Let  $P_{\gamma}\le G_{\{b\}}$ be a local pointed group on $B$ and $i\in \gamma$ be a primitive idempotent of $B^P$. Then $B^Pi$ is an indecomposable projective $B^P$-module, while $Ai$ is an indecomposable $\bar G$-graded $(A,\Od P)$-bimodule. Note that $iAi$ is $\bar{G}$-graded subalgebra of $A$, with endomorphism algebra
$\End_A(Ai)^{\mathrm{op}}\simeq iAi$
as $\bar G$-graded ${\Od P}$-interior algebras. One gets a Clifford theoretical context by viewing $P$ as a normal subgroup in its normalizer $N_G(P)$ or in the semidirect product $P\rtimes\Aut(P)$. Our main objects of study are the stabilizers of the isomorphism classes of  $B^Pi$, respectively $Ai$, and the associated residual Clifford extensions.

We should mention that block extensions are the focus of \cite{KuPu} without considering group gradings, but introducing so called $S$-permutation groups instead (see \cite[1.6 and 2.4]{KuPu}). Fusion systems on $B$ are discussed in \cite[Part IV]{AKO}, in \cite[Section 3]{KS}, in \cite[Section 8]{Pu} and in \cite{YYZ}, but here we bring in the analysis of the grading.

The paper is organized as follows. In Section 2 we recall the needed facts on group gradings on modules and bimodules, and on Clifford extensions. In Section 3 we introduce the group $\Aut^{\bar G}(P)$ of $\bar G$-automorphisms of $P$, and its subgroup $F_A^{\bar G}(P_\gamma)$ of $(A,\bar G)$-fusions of $P_\gamma$ (where $A$ is, more generally, a $\bar G$-graded $G$-interior algebra), which is defined as the stabilizer of the $G$-graded $(A,\mathcal{O}P)$-bimodule $Ai$ under a natural action of $\Aut^{\bar G}(P)$. When $\bar G$ is trivial, our definition is equivalent to \cite[Definition 2.5]{Puig2m}.

In Section 4 we discuss automorphisms of $P$ determined by $G$-conjugation, leading to the group $E_G^{\bar G}(P_\gamma)$ of $(G,\bar G)$-fusions of $P_\gamma$. We show that in the case of block extensions, the group of $(A,\bar G)$-fusions is isomorphic to the group of $(G,\bar G)$-fusions.

Next, we consider residual Clifford extensions of indecomposable modules associated with $P_\gamma$, and we generalize to the case of block extensions the results of \cite[Section 6]{Puig2m}. There are two distinct constructions, as mentioned above, and we show in Section 5 that they are isomorphic. The next two sections are devoted to the study of the behavior of these Clifford extensions  under several important operations: the extended Brauer construction of \cite{CM2}, embeddings of $P$-algebras, and tensor products of $\bar G$-graded algebras.

We state here the main result of the paper, which, as we explain in {\ref{n:8.1}}, is a generalization of \cite[Theorem 6.4]{CM} and of the main result of \cite{PuZh}, and is concerned with local equivalences induced by group graded basic Morita equivalences. First, we recall the recently introduced definition of $\bar G$-graded basic Morita equivalence.
\begin{defn} \label{defn11} \cite[Theorem 3.10, Definition 4.2.]{CM}
Let $G$ and $G'$ be finite groups, let $\omega:G\rightarrow \bar{G}$ and $\omega':G'\rightarrow\bar{G}$ be group epimorphisms with $H=\Ker \omega$ and $H'=\Ker \omega'$. Let $b$ be a $G$-invariant block of $\Od H$, and $b'$ a $G'$-invariant block of $\Od H'$. Consider the canonical group epimorphism
\[\phi:G\times G'\to \bar{G}\times \bar{G}\] and let $\Delta:=\Delta(\bar{G}\times \bar{G})$ denote the diagonal subgroup of $\bar{G}\times \bar{G}.$   Set $\phi^{-1}(\Delta)=:K\leq G\times G'.$

Let $P_{\gamma}$ be a defect pointed group of $G_{\{b\}}$ and $P'_{\gamma'}$ be a defect pointed group of $G'_{\{b'\}}.$ Let $M$ be an indecomposable $\Od(H\times H')$-module associated with $b\otimes (b')^{\mathrm{op}}$ (that is, $M$ is an $(\Od H,\Od H')$-bimodule such that $bMb'=M$) providing a Morita equivalence between the block algebras $B:=\Od Hb$ and $B':=\Od H'b'.$ If $M$ extends to a $\Od K$-module, then we say that the $\bar{G}$-graded algebras $A:=\Od Gb$ and $A':=\Od G'b'$ are \textit{$\bar{G}$-graded Morita equivalent}.

In this situation, by \cite[Theorem 3.10]{CM},  $M$ has a vertex $\ddot{P}\leq K$, and the projections $G\times G'\to G$ and $G\times G'\to G'$ restrict to the projections $\ddot{P}\to P$ and $\ddot{P}\to P'.$ If, in addition, one of these two last projections is an isomorphism, then
\[\tag{1}\label{groupiso}P\simeq P',\]
and we say that there is a \textit{basic $\bar{G}$-graded Morita equivalence} between the block extensions $A$ and $A'$.
\end{defn}

The main result of this paper is Theorem \ref{thm6.1} below, which will be proved in Section \ref{sec8}.  We assume that the $\Od K$-module $M$ induces a basic $\bar G$-graded Morita equivalence between $A$ and $A'$, as in Definition \ref{defn11}. Let $\ddot{P}\leq K$ be a  vertex of $M$, and let $\ddot{N}$ be an $\Od \ddot{P}$-source module of $M.$ Consider the $\Od \ddot{P}$-interior algebra
\[S:=\End_{\Od}(\ddot{N}).\]
Then our assumptions, together with the isomorphism (\ref{groupiso}), determine the $P\simeq \ddot{P}$-interior algebra embedding
\[\tag{2}\label{idembbedding}f:(\Od G)_{\gamma}\to S\otimes_{\mathcal{O}}(\Od G')_{\gamma'}.\]

Let $Q$ be a subgroup of $ P$ and let $Q'$ be the subgroup of $P'$ corresponding to $Q$ via the isomorphism (\ref{groupiso}). Since $S$ is a Dade $p$-algebra, and $Q\simeq Q'\simeq \ddot{Q}\leq \ddot{P}$, by applying the Brauer construction to the embedding  (\ref{idembbedding}), we obtain the $N_P(Q)$-algebra embedding
\[\tag{3}\label{idquotientdembbedding}\bar{f}:(\Od G)_{\gamma}(Q)\to S(Q)\otimes_{k }(\Od G')_{\gamma'}(Q').\] If $Q_{\delta}$ is a local pointed group included in $P_{\gamma}$, then the embedding (\ref{idquotientdembbedding}) provides an unique local point $\delta'$ corresponding to $\delta$ such that   $Q'_{\delta'}$ corresponds to $Q_{\delta}$ and $Q'_{\delta'}\leq P_{\gamma'}.$ In general, the above construction yields a bijective correspondence between all the local points included in $P_{\gamma}$ and those included in $P'_{\gamma'}$ such that $Q\simeq Q'$.  We denote (see Section \ref{s:sec4})
\[E:=E_{G}^{\bar{G}}(Q_{\delta})= N_G(Q_{\delta})/C_H(Q), \qquad E':=E_{G'}^{\bar G}(Q'_{\delta'})=N_{G'}(Q'_{\delta'})/C_{H'}(Q').\]

\begin{thm} \label{thm6.1} With the above notations,  if $A$ is basic $\bar G$-graded Morita equivalent to $A'$ then:

{\rm 1)}  There is a group isomorphism
\[F_A^{{\bar G}}(Q_{\delta})\simeq F_{A'}^{{\bar G}}(Q'_{\delta'}); \]

{\rm 2)}  There is an $E\simeq E'$-graded basic Morita equivalence
\[kN_G(Q_{\delta})b_{\delta}\sim kN_{G'}(Q'_{\delta'})b'_{\delta'}.\]
\end{thm}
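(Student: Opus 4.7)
The overall strategy is to exploit the embedding $\bar{f}$ from (\ref{idquotientdembbedding}) together with the structure of the source module $\ddot{N}$ of $M$. For part (1), the plan is to reduce the problem to producing a canonical isomorphism between the groups $E$ and $E'$, using the identification $F_A^{\bar{G}}(Q_\delta) \simeq E_G^{\bar{G}}(Q_\delta)$ established in Section \ref{s:sec4}. For part (2), the goal is to construct the required Morita bimodule at the normalizer level by applying a Brauer-type construction to $\ddot{N}$, and then to verify that the resulting equivalence carries the correct $E$-grading and is basic.

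For part (1), since the cited result from Section \ref{s:sec4} already gives $F_A^{\bar{G}}(Q_\delta) \simeq E$ and $F_{A'}^{\bar{G}}(Q'_{\delta'}) \simeq E'$, it suffices to produce an isomorphism $E \simeq E'$. This should follow by tracking how normalizer elements behave through the embedding $\bar{f}$: any element $n \in N_G(Q_\delta)$ lifts, via the $\Od K$-structure on $M$ and the isomorphism $P \simeq P'$ from (\ref{groupiso}), to an element of $K$ whose second projection lies in $N_{G'}(Q'_{\delta'})$. Since $\bar{f}$ matches local points and respects the conjugation action, factoring out $C_H(Q)$ on the $G$-side and $C_{H'}(Q')$ on the $G'$-side yields the desired isomorphism $E \simeq E'$, and this isomorphism is $\bar{G}$-equivariant in the obvious sense.

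For part (2), the plan is to combine the Brauer quotient $\ddot{N}(\ddot{Q})$ with the global bimodule $M$ in order to produce a bimodule $N$ that realizes the claimed Morita equivalence between $kN_G(Q_\delta)b_\delta$ and $kN_{G'}(Q'_{\delta'})b'_{\delta'}$. Since $\ddot{N}$ is endopermutation (as the source module of a basic Morita equivalence), its Brauer quotient at $\ddot{Q}$ is an endopermutation $kN_{\ddot{P}}(\ddot{Q})$-module, and this will serve as the source module for the new equivalence. The vertex of the resulting bimodule will be (an extension involving) $N_{\ddot{P}}(\ddot{Q})$, which still projects isomorphically onto both $N_P(Q)$ and $N_{P'}(Q')$, ensuring that the equivalence remains basic in the sense of Definition \ref{defn11}.

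The main obstacle I anticipate is verifying that this constructed bimodule carries precisely the $E \simeq E'$-grading and that its source module fits the exact requirements of a basic $\bar{G}$-graded Morita equivalence at the normalizer level. This will rely on the compatibility results from Sections 5--7, concerning the behavior of residual Clifford extensions under the extended Brauer construction of \cite{CM2}, under embeddings of $P$-algebras, and under tensor products of $\bar{G}$-graded algebras. In particular, the correct identification of the $E$-grading is subtle, since $E$ involves the quotient by $C_H(Q)$ rather than by a centralizer in the ambient group, so care is needed in tracing the $\bar{G}$-grading on $A$ through the restriction to normalizers and then through the Brauer construction.
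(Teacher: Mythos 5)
Your overall strategy points in the same direction as the paper's proof (compute fusions in source algebras, apply a Brauer-type construction to the source module $\ddot N$, induce up to the normalizers), but in both parts the step you leave to ``tracking'' or ``combining'' is precisely the hard content, so there are genuine gaps. For part (1), the assertion that every $n\in N_G(Q_\delta)$ lifts via the $\Od K$-structure on $M$ to an element of $K$ whose second projection lies in $N_{G'}(Q'_{\delta'})$ is not a consequence of the module structure; it is essentially a restatement of what is to be proved. The actual mechanism is: fusions are computed in the source algebra and are invariant under embeddings of $\bar G$-graded $P$-algebras (Proposition \ref{p:embedding}); the embedding (\ref{idembbedding}) transports them into $S\otimes(\Od G')_{\gamma'}$; and, crucially, $S$ is a Dade $p$-algebra, so $S(Q)$ is a full matrix algebra and contributes no fusion --- this is the multiplicity-module argument of \cite[1.17]{KuPu}, adapted to the $\bar G$-fusions of Definition \ref{defnGfus}. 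Without that step you cannot produce an element of $N_{G'}(Q'_{\delta'})$ inducing a prescribed automorphism of $Q\simeq Q'$. (Your reduction of part (1) to $E\simeq E'$ via Proposition \ref{propEisoF} is legitimate, and is indeed how the paper passes from 1) to 2).)

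For part (2), your outline omits the local analysis that makes the induction step meaningful. Before inducing anything one must identify a defect pointed group of $b_\delta$ in $N_G(Q_\delta)$: the paper shows, via a Mackey decomposition and the bookkeeping of points $\beta$ and $\mu$, that a defect group is $T=N_P(Q_\delta)$ (the normalizer of the \emph{pointed} group, not $N_P(Q)$ as in your sketch) and that $T_{\Br_Q(\mu)}$ is a defect pointed group, with the matching statement for $T'_{\mu'}$. One then needs the $\bar G$-graded $T$-interior source-algebra embedding $(\Od G)_\mu\to S\otimes(\Od G')_{\mu'}$, and the passage to the normalizer blocks goes through the extended Brauer quotient isomorphisms $\bar N^E_{(\Od G)_\mu}(Q)\simeq (kN_G(Q_\delta))_{\widehat{\Br_Q(\mu)}}$ of \cite{CT} and \cite{CM2}, combined with \cite[Proposition 3.8]{PuZh}, to obtain an $E\simeq E'$-graded $T$-interior embedding $(kN_G(Q_\delta))_{\widehat{\Br_Q(\mu)}}\to S(\ddot Q)\otimes (kN_{G'}(Q'_{\delta'}))_{\widehat{\Br_{Q'}(\mu')}}$. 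Only then does one know that $S(\ddot Q)=\End_{\Od}(\ddot N_{\ddot Q})$ for an endo-permutation $k\ddot T$-module $\ddot N_{\ddot Q}$, and that an indecomposable summand of its induction to $N_{\ddot G}(\ddot Q_{\delta\times\delta'})$, further induced to $N_G(Q_\delta)\times N_{G'}(Q'_{\delta'})$, gives a basic $E$-graded Morita equivalence by \cite[Theorems 3.10 and 3.11]{CM}. You correctly guess the source module, but the vertex computation, the local point correspondence, and the graded embedding that legitimize the construction are missing.
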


Finally, the last result of this paper is  Corollary \ref{c:final}, which says that the residual Clifford extensions (or $k^*$-groups in Puig's terminology) associated to local pointed groups, are preserved by $\bar G$-graded basic Morita equivalences.  This is a consequence of Theorem \ref{thm6.1} and of our treatment of these Clifford extensions  as group graded endomorphism algebras in Sections \ref{sec5} and \ref{sec6}.

%%%%%%%%%%%%%%%%%%%%%%%%%%%%%%%%%%%%%%%%%%%%%%%%%%%%%%%%%%%%%%%%%%%%%%%%%%%%%%%%%%%%%%%%%%%%%
\section{Preliminaries}\label{sec2}

In this section we recall some results from  Clifford theory in the language of group graded algebra, as introduced by E. C. Dade in \cite{Dade} and \cite{Dade2}. As in the introduction, let $G$ be a finite group, $H$ a normal subgroup of $G$, and let $\bar G=G/H$. It is often more flexible to consider a group epimorphism $\omega:G\to \bar G$ such that $H=\Ker \omega$. Let $A=\bigoplus_{g\in \bar G}A_g$ be a $\bar G$-graded $\mathcal{O}$-algebra.

\begin{nim} \label{s:diagonal} Let $A'$ be another $\bar G$-graded $\mathcal{O}$-algebra. Then the diagonal subalgebra of $A\otimes A'$ is the $\bar G$-graded algebra
\[\Delta(A\otimes A')=\bigoplus_{g\in\bar G}(A_g\otimes {A'}_g).\]
Let $M=\bigoplus_{x\in \bar G}M_x$ be a $\bar G$-graded $(A,A')$-bimodule. Then $M_1$ is a $\Delta:=\Delta(A\otimes {A'}^{\mathrm{op}})$-module, and if both $A$ and $A'$ are strongly graded, then we have a natural isomorphism
\[M\simeq (A\otimes {A'}^{\mathrm{op}})\otimes_{\Delta}M_1\]
of $\bar G$-graded $(A,A')$-bimodules.
\end{nim}

\begin{nim} Let again $M$ be a $\bar G$-graded $(A,A')$-bimodule. For $y\in \bar G$ we denote
\[M(y):=\bigoplus_{x\in \bar G}M(y)_x, \qquad \textrm{ where } \ M(y)_x=M_{xy}.\]
Consider also the $\bar G$-graded algebra $A^y$, where
\[A^{y}:=\bigoplus_{g\in \bar G}(A^y)_g, \qquad (A^y)_g=A_{y^{-1}gy}.\]
Then it is clear that $M(y)$ is a $\bar G$-graded $(A,{A'}^{y})$-bimodule.

Let $\varphi$ be the automorphism of $A'$ given by conjugation with an invertible element $u\in A'_y$ of degree $y\in\bar G$, that is, $\varphi(a')={}^{u}a=ua'u^{-1}$. Then $\varphi:{A'}^y\to {A'}$    is an isomorphism of  $\bar G$-graded algebras.

Moreover, if we define $M_{\varphi}$ as left $A$-module, with the right $A'$-module structure given by $m\cdot a'=m\varphi(a')$, then $(M_{\varphi})(y)=M(y)_{\varphi}$ becomes a $\bar G$-graded $(A,A')$-bimodule.
\end{nim}

\begin{nim}\label{remgeneralGgraded} Assume that $A$ is a crossed product between $B:=A_1$ and $\bar G$. This means that the group $\mathrm{hU}(A)=\bigcup_{g \in\bar G}(A^\times\cap A_g)$ of homogeneous units of $A$ is a group extension of $B^\times$ by $\bar G$.

Let $g\in \bar G$ and $u_g\in A^{\times}\cap A_g$, so $A_g=Bu_g=u_gB.$ Let $V$ be an $A_1$-module and define ${}^gV:=V$ as a set, with the $B$-module structure given by
\[a_1\cdot v=(u_g^{-1}a_1u_{g})v\]
for any $v\in V$ and $a_1\in B$. It is easy to verify the existence of an $B$-module isomorphism
\[A_g\otimes_{B}V\simeq {}^gV.\]
This $A_1$-module is called the $g$-conjugate of $V$, and $V$ is called $\bar G$-invariant, if $V\simeq {}^g V$ as $B$-modules for all $g\in \bar G$.
\end{nim}

\begin{nim} \label{Clifford-ext} Let $A$ and $V$ be as before, and consider the $\bar G$-graded $A$-module $M=A\otimes_{B}V$. Then the endomorphism algebra $A':=\End_A(M)^{\mathrm{op}}$ is a $\bar G$-graded algebra with  homogeneous $g$-component for $g\in \bar G$ given by
\[A'_g=\Hom_{A\textrm{-Gr}}(M,M(g))\simeq \Hom_{A_1}(M_1,M_g)\simeq \Hom_{A_1}(M_x,M_{xg})\]
for any $g,x\in \bar G$. In this way, $M$ becomes a $\bar G$-graded $(A,A')$-bimodule. Note that $A'$ is a crossed product if and only if $V$ is a $\bar G$-invariant $B$-module. Then the graded Jacobson radical $J_{\mathrm{gr}}(A')$ equals $J(B)A=AJ(B)$, and $\overline{A'}:=A/J_{\mathrm{gr}}(A')$ is still a crossed product of $B/J(B)$ and $\bar G$. In this case, the group extension $\mathrm{hU}(A')$ is called the {\it Clifford extension} of $V$, while $\mathrm{hU}(\overline{A'})$ is the {\it residual Clifford extension} of $V$.
\end{nim}

\begin{nim} \label{projcover} Assume, in addition, that $V$ is a $G$-invariant simple $B$-module,  and let $P_V\rightarrow V\rightarrow 0$ be a projective cover of $V$. Then $P_V$ is a $G$-invariant $B$-module, and $\mathrm{ann}_B(V)$ is a $G$-invariant ideal of $B$. Moreover, we have the isomorphisms
\begin{align*}\End_{A}(A\otimes_{B}P_V)/J_{gr}(\End_A(A\otimes_{B}P_V))  &\simeq \End_A(A\otimes_{B}V)   \\
    &\simeq \End_{A/\mathrm{ann}_B(V)A}(A/\mathrm{ann}_B(V)A\otimes_{B/\mathrm{ann}_B(V)}V)\end{align*}
of $\bar G$-graded algebras, so in particular, the Clifford extension of $V$ is isomorphic to the residual Clifford extension of $P$.
\end{nim}

\begin{nim} \label{s:skew*} We will often use the construction of a $\bar G$-graded crossed product (generalized skew group algebra) from \cite[Section 9]{Pu}. Assume that $B$ is an $H$-interior $G$-algebra, that is, we have a map $\mathcal{O}H\to B$ of $G$-algebras. Let $A=B\otimes_{\mathcal{O}H}\mathcal{O}G$, and define the multiplication by
\[(a\otimes g)(b\otimes h)=a\cdot{}^gb\otimes gh.\]
We get that $A$ is a $\bar G$-graded $G$-interior  algebra, which will be denoted $A=B*\bar G$. In fact, any $\bar G$-graded $G$-interior  algebra is obtained in this way.

As a variation, assume that $K$ is another normal subgroup of $G$ such that $B$ is $K$-{\it trivial}, by which we mean that $K$ acts trivially on  $B$, and the restriction  $K\cap H\to B$ is also trivial. Then $B$ becomes a $KH/K$-interior $G/K$-algebra, so denoting $\tilde G=G/KH$,  we may construct the $\tilde G$-graded $G$-interior algebra $\tilde A=B*\tilde G$.

We may compare the Clifford extensions of the simple $B$-module $V$ which occur in this setting. If
\[I:=\{g\in G\mid {}^gV\simeq V \textrm{ as } B\textrm{-modules}\}\]
denotes the stabilizer of $V$ in $G$, then clearly, the stabilizer of $V$ in $\bar G$ is $\bar I=I/H$, while the stabilizer of $V$ in $\tilde G$ is $\tilde I=I/KH$. Moreover, since $V$ extends trivially to $A_{\bar K}=B*\bar K$, where $\bar G=KH/H$, it is not difficult to see that there is a map of $\tilde I$-graded algebras from $\End_A(A\otimes_BV)$ to $\End_{\tilde A}(\tilde A\otimes_BV)$, so the Clifford extension of $V$ with respect to $A$ is obtained from the Clifford extension of $V$ with respect to $\tilde A$ by inflation from $\tilde I$ to $\bar I$.
\end{nim}

%%%%%%%%%%%%%%%%%%%%%%%%%%%%%%%%%%%%%%%%%%%%%%%%%%%%%%%%%%%%%%%%%%%%%%%%%%%%%%%%%%%%%%%%%%%%%
\section{Group graded $A$-fusion groups} %\label{sec3}
In this section  $A$ is a $\bar{G}$-graded $G$-interior algebra, with identity component $B:=A_1$, which is an $H$-interior $G$-algebra.

\begin{nim} \label{hyp3.1} Let $\gamma$ be a point of $B^P$, where $P$ is any subgroup of $G$, and let $i\in\gamma$ be a primitive idempotent. We will regard the pointed group $P_\gamma$ as the isomorphism class of the $\bar G$-graded $(A,\mathcal{O}P)$-bimodule summand $Ai$ of $A$, which is essentially the point of view in \cite[Appendix]{L}.  Then its endomorphism algebra $\End_A(Ai)^{\mathrm{op}}\simeq iAi$ is $\bar{G}$-graded, $P$-interior subalgebra of $A$. We will assume that the structural map $P\rightarrow (iAi)^{\times}$ is injective.

Our Definition \ref{defnGfus} below of fusions is more general than that of \cite[Section 8]{Pu}, and it is equivalent, when $\bar G$ is trivial, to \cite[Definition 2.5]{PuLo}, but for simplicity, we will only consider automorphisms of $P$, and not fusions between two pointed groups. Moreover, it is also an improvement of \cite[Definition 6.4]{CM}, where still a smaller automorphism group of $P$ is considered.
\end{nim}

\begin{defn}\label{defnGaut}

 a) The \textit{group of $\bar G$-automorphisms of $P$} is
\[\Aut^{\bar{G}}(P):=\{(\varphi,\bar{g})\mid \varphi \in \Aut(P),\ g\in G \text{ such that } \overline{\varphi(u)}={}^{\bar{g}}\bar{u},\ \forall u\in P\},\]

b) The \textit{group of interior $\bar G$-automorphisms of $P$} is
\[\Int^{ \bar{G}}(P):=\{(c_v,\bar{g})\mid c_v\in \Int(P),\ v\in P,\ \bar{g}\in\bar{G} \text { such that } \overline{c_v(u)}={}^{\bar{g}}\overline{u},\ \forall u\in P\}\]
\end{defn}

Clearly, $\Aut^{\bar{G}}(P)$ is a subgroup of $\Aut(P)\times \bar{G}$. In particular, if $(\varphi,\bar{g})\in\Aut^{\bar{G}}(P)$ then $\bar{g}\in N_{\bar{G}}(\overline{P})$.
Moreover, $\Int^{ \bar{G}}(P)$ is a normal subgroup of $\Aut^{\bar{G}}(P)$, and there is a group homomorphism
\[P\rightarrow \Int^{\bar{G}}(P), \qquad v\mapsto (c_v,\overline{v}).\] We denote $\Aut^{\bar{G}}(P)/\Int^{\bar{G}}(P)$ by
$\Out^{\bar{G}}(P)$, which will be called the group of\textit{ exterior $\bar G$-automorphisms} of $P$.

\begin{defn}\label{defnGfus}

a) The \textit{group of $(A,\bar{G})$-fusions} of $P_\gamma$ is the subgroup
\[F_A^{\bar{G}}(P_{\gamma}):=\{(\varphi,\bar{g})\in\Aut^{\bar{G}}(P) \mid Ai\simeq (Ai)(\bar{g}^{-1})_{\varphi} \text{ as } \bar{G}\text{-graded } (A, \Od P)\text{-bimodules}\}.\]
of $\Aut^{\bar{G}}(P)$.

b) The \textit{group of exterior $(A,\bar{G})$-fusions} of $P_{\gamma}$ is the subgroup
\[\tilde{F}_A^{\bar{G}}(P_{\gamma}):= F_A^{\bar{G}}(P_{\gamma})/ \Int^{\bar{G}}(P),\]
of $\Out^{\bar{G}}(P)$.
\end{defn}

\begin{rem} There is an action of $\Aut^{\bar{G}}(P)$ on the set of isomorphism classes of $\bar{G}$-graded $(A,\Od P)$-bimodules given by
\[(\varphi, \bar{g})\cdot M:=(M(\bar{g}^{-1}))_{\varphi},\]
where  $(\varphi, \bar{g})\in\Aut^{\bar{G}}(P)$ and $M$ is a representative of an isomorphism class of  $\bar{G}$-graded $(A,\Od P)$-bimodules. With these notations we obtain \[F_A^{\bar{G}}(P_{\gamma})=\operatorname{Stab}_{\Aut^{\bar{G}}(P)}(Ai).\]
\end{rem}

The next result is a graded variant of \cite[Corollary 2.13]{PuLo}.

\begin{prop}\label{propnormfus} Assume that the structural group homomorphism $P\rightarrow (iAi)^{\times}$ is injective. There is a  homomorphism
\[\Phi:N_{\h(iAi)}(Pi)\rightarrow F_A^{\bar{G}}(P_{\gamma}),\]
of finite groups which induces the isomorphisms
\[\overline{\Phi}:N_{\h(iAi)}(Pi)/C_{(iBi)^{\times}}(Pi)\rightarrow F_A^{\bar{G}}(P_{\gamma})\]
and
\[\frac{ N_{\h(iAi)}(Pi)}{PC_{(iBi)^{\times}}(Pi)}\rightarrow \tilde{F}_A^{\bar{G}}(P_{\gamma}).\]
\end{prop}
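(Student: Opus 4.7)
The plan is to construct $\Phi$ explicitly, then identify its kernel and prove surjectivity by inverting the construction, and finally cut down by $\Int^{\bar G}(P)$ to obtain the second induced isomorphism.

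First, given $u\in N_{\h(iAi)}(Pi)$, I write $u\in(iAi)_{\bar g}^{\times}$ for some $\bar g\in\bar G$ and let $\varphi_u\in\Aut(P)$ be the unique automorphism with $u(vi)u^{-1}=\varphi_u(v)i$ for all $v\in P$; this is well defined by the hypothesis that $P\to(iAi)^{\times}$ is injective. Set $\Phi(u):=(\varphi_u,\bar g)$. To check $(\varphi_u,\bar g)\in\Aut^{\bar G}(P)$, I observe that $uvu^{-1}=\varphi_u(v)i=i\varphi_u(v)$ lies in $A_{\overline{\varphi_u(v)}}$, while $uvu^{-1}\in A_{\bar g\bar v\bar g^{-1}}$; comparing degrees gives $\overline{\varphi_u(v)}={}^{\bar g}\bar v$. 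To verify $(\varphi_u,\bar g)\in F_A^{\bar G}(P_\gamma)$, I define $f_u:Ai\to(Ai)(\bar g^{-1})_{\varphi_u}$ by $f_u(x)=xu^{-1}$; it is left $A$-linear, a bijection (inverse $y\mapsto yu$), and preserves the shifted grading because $u^{-1}\in(iAi)_{\bar g^{-1}}$. Right $\mathcal{O}P$-linearity with the $\varphi_u$-twist reduces to $vu^{-1}=u^{-1}\varphi_u(v)$, which is exactly the defining relation for $\varphi_u$. The identity $\Phi(u_1u_2)=\Phi(u_1)\Phi(u_2)$ is straightforward from the compatibility of conjugation with products and additivity of degrees.

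For the kernel, $\Phi(u)=(\mathrm{id},1)$ forces $u\in iA_1i=iBi$ and that $u$ centralizes $Pi$, so $\ker\Phi=C_{(iBi)^{\times}}(Pi)$. Surjectivity is the main technical step. Given $(\varphi,\bar g)\in F_A^{\bar G}(P_\gamma)$ with a fixed bimodule isomorphism $f:Ai\to(Ai)(\bar g^{-1})_{\varphi}$, I set $u:=f^{-1}(i)$. Left $A$-linearity applied to $i=i\cdot i$ shows $u\in iAi$, and since $i$ lives in the degree $\bar g$ component of the shifted module, preservation of grading forces $u\in iA_{\bar g}i$. Expanding $f^{-1}(f(i))=i$ and $f(f^{-1}(i))=i$ via left $A$-linearity yields $f(i)u=i=uf(i)$, so $u$ is a homogeneous unit with $u^{-1}=f(i)\in iA_{\bar g^{-1}}i$. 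Right $\mathcal{O}P$-linearity of $f$ applied at $x=i$, combined with $iv=vi$ and left $A$-linearity (giving $f(vi)=vf(i)$), yields $vf(i)=f(i)\varphi(v)$, equivalently $uvu^{-1}=\varphi(v)i$. Therefore $u\in N_{\h(iAi)}(Pi)$ and $\Phi(u)=(\varphi,\bar g)$.

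For the second induced isomorphism, I compute the preimage of $\Int^{\bar G}(P)$ under $\Phi$. If $\Phi(u)=(c_v,\bar v)$ with $v\in P$, then $w:=(vi)^{-1}u=v^{-1}u$ has degree $1$, is a unit of $iBi$, and satisfies $wyw^{-1}=v^{-1}c_v(y)v=y$ for all $y\in P$; hence $w\in C_{(iBi)^{\times}}(Pi)$ and $u\in P\cdot C_{(iBi)^{\times}}(Pi)$, where $P$ is identified with its image in $(iAi)^{\times}$. The converse is immediate from $\Phi(vi)=(c_v,\bar v)$, which in particular verifies $\Int^{\bar G}(P)\subseteq F_A^{\bar G}(P_\gamma)$ so that the quotient $\tilde F_A^{\bar G}(P_\gamma)$ is defined. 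Passing to quotients gives the second isomorphism. The main obstacle, as indicated, is the surjectivity step, where one must extract a concrete homogeneous unit from an abstract bimodule isomorphism and confirm that its inverse sits in the correct graded component; the remainder is careful bookkeeping of degrees and conjugation actions.
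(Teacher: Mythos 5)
Your proof is correct, and your surjectivity argument takes a genuinely different (and cleaner) route than the paper's. The paper invokes the Krull--Schmidt theorem to extend the graded bimodule isomorphism $f:Ai\to (Ai)(\bar g^{-1})_{\varphi}$ to an automorphism of $A$ as a left $A$-module, realizes that automorphism as right multiplication by some $a\in A^{\times}\cap A_{\bar g}$, and then needs a separate computation in $\End_A(Ai)$ (showing $\tilde f(\alpha_i)=\alpha_{aia^{-1}}=\alpha_i$, hence $aia^{-1}=i$) to conclude that $ai=ia\in (iAi)^{\times}\cap(iAi)_{\bar g}$ is the desired preimage. You instead simply evaluate: setting $u:=f^{-1}(i)$, the facts $u\in iA_{\bar g}i$, $u^{-1}=f(i)$ and $uvu^{-1}=\varphi(v)i$ all fall out of left $A$-linearity, degree preservation, and the $\varphi$-twisted right $\mathcal{O}P$-linearity. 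This avoids Krull--Schmidt entirely and makes the normalization $iu=ui=u$ automatic rather than something to be proved afterwards. You also spell out the verification that $\Phi$ actually lands in $F_A^{\bar G}(P_\gamma)$ (via $f_u(x)=xu^{-1}$), which the paper states but does not check.

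One caveat concerns the last isomorphism, which the paper's own proof does not address at all: you compute the preimage under $\Phi$ of the image of $P\to\Int^{\bar G}(P)$, $v\mapsto(c_v,\bar v)$. As literally written in Definition \ref{defnGaut}(b), $\Int^{\bar G}(P)$ may also contain pairs $(c_v,\bar g)$ with $\bar g\in\bar v\,C_{\bar G}(\bar P)$ and $\bar g\neq\bar v$; if such a pair belongs to $F_A^{\bar G}(P_\gamma)$, an argument as in your kernel computation only places its preimage in $P\cdot C_{\h(iBi\cdot(iAi))}$-type centralizers of homogeneous degree $\bar v^{-1}\bar g\neq 1$, not in $PC_{(iBi)^{\times}}(Pi)$. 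This is an ambiguity in the paper's definition of $\Int^{\bar G}(P)$ (which should presumably be the image of $P$) rather than a flaw in your reasoning, but your phrase ``If $\Phi(u)=(c_v,\bar v)$'' silently assumes the restricted reading, and that assumption should be made explicit.
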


\begin{proof} It is easy to verify that $F_A^{\bar{G}}(P_{\gamma})$ is a subgroup of $\Aut^{\bar{G}}(P)$. Let $\Phi$ be defined by
\[\Phi(a)=(\varphi_a, \bar{g})\]
for any $a\in (iAi)^{\times}\cap (iAi)_{\bar{g}}$ normalizing $Pi$, where $\varphi_a$ is the automorphism of $P$ given by the conjugation with $a$. It is clear that $\Phi$ is a well-defined homomorphism of groups.

First we verify  that $\Phi$ is surjective. For this, let $(\varphi,\bar{g})\in F_A^{\bar{G}}(P_{\gamma})$ such that there is $f:Ai\rightarrow (Ai)(\bar{g}^{-1})_{\varphi}$ an isomorphism of $\bar{G}$-graded $(A,\Od P)$-bimodules.  Since $Ai$ and $(Ai)(\bar{g}^{-1})_{\varphi}$ are  direct summands of $A$ as left $A$-modules  it follows, by the Krull-Schmidt theorem, that there is $\overline{f}\in\Aut_A(A)$ such that $\overline{f}|_{Ai}=f$. Then there is $a\in A^{\times}$ such that $\overline{f}(b)=ba$ for any $b\in A$. In particular
\[f:Ai\rightarrow (Ai)(\bar{g}^{-1})_{\varphi}\quad f(bi)=bia\]
for any $b\in A$. Since $f$ is an isomorphism of $\bar{G}$-graded bimodules, we obtain that $a$ is an homogeneous unit, that is,  $a\in A^{\times}\cap A_{\bar{g}}$ for some $\bar g\in \bar G$. We show that $ai=ia$ by using the well-known isomorphisms
\[\psi :iAi\rightarrow \End_A(Ai), \qquad \psi(ibi)=\alpha_{ibi}\]
and
\[\psi^{-1}: \End_A(Ai)\rightarrow iAi, \qquad \psi^{-1}(\alpha)=\alpha(i),\]
for any $b\in A$ and $\alpha\in  \End_A(Ai)$, where $\alpha_{ibi}$ is the right multiplication by $ibi$. It is clear that $f$ induces and isomorphism
\[\tilde{f}:\End_A(Ai)\rightarrow\End_A(Ai)\quad \tilde{f}(\alpha)=f\circ \alpha \circ f^{-1}\]
for any $\alpha\in\End_A(Ai)$.  Since $\tilde{f}(\alpha_i)=\mathrm{id}_{Ai}$ it follows that
\begin{equation}\label{eq1}
i=ia^{-1}ia.
\end{equation}
 For any $b'\in A$ we apply (\ref{eq1})  and the definitions of $f,f^{-1}, \alpha_i, \alpha_{aia^{-1}}$ to obtain
\[\tilde{f}(\alpha_{aia^{-1}})(b'i)=b'ia^{-1}aia^{-1}a=b'i=\tilde{f}(\alpha_i)(b'i)\]
hence $\alpha_{aia^{-1}}=\alpha_i$. Next we apply $\psi^{-1}$ to get $i=aia^{-1}$. It follows that
\[ai=ia=iai\in(iAi)^{\times}\cap (iAi)_{\bar{g}}\] and
\[\Phi(ai)=(\varphi_{ai},\bar{g})=(\varphi,\bar{g}).\]
Next, we verify that  $C_{(iBi)^{\times}}(Pi)=\Ker \Phi$. If $a\in \Ker \Phi$ then
\[\Phi(a)=(\varphi_a,\bar{g})=(\mathrm{id}_P,\overline{1}),\]
hence $Ai\simeq (Ai)_{\varphi_a}$ as $\bar{G}$-graded $(A,\Od P)$-bimodules and $\varphi_a(u)=u$ for any $u\in P$. The same argument as for surjectivity assure us that $a\in (iBi)^{\times}$. The reverse inclusion is straightforward.
\end{proof}

%%%%%%%%%%%%%%%%%%%%%%%%%%%%%%%%%%%%%%%%%%%%%%%%%%%%%%%%%%%%%%%%%%%%%%%%%%%%%%%%%%
\section{$G$-fusion and  $A$-fusion in the group algebra case} \label{s:sec4}

Let,  as in the Introduction,  $A=\Od G b$ and  $B=\Od Hb$. We regard $B$ as an $H$-interior $G$-algebra and let $P_{\gamma}\leq G_{\{b\}}$ be a local pointed group on $B$, where $P$ is a $p$-subgroup of $G$.

\begin{defn}\label{defnGGfus} a) The \textit{group of $(G,\bar{G})$-fusions} of $P_\gamma$ is
\[E_G^{\bar{G}}(P_{\gamma}):=N_G(P_{\gamma})/C_H(P).\]

b) The \textit{group of exterior $(G,\bar{G})$-fusions} of $P_{\gamma}$ is
\[\tilde{E}_G^{\bar{G}}(P_{\gamma}):=N_G(P_{\gamma})/PC_H(P).\]
\end{defn}

The next result is a graded variant of \cite[Theorem 3.1]{Puig2}.

\begin{prop}\label{propEisoF} With the above notations the following group isomorphisms  hold:
\[{E}_G^{\bar{G}}(P_{\gamma})\simeq N_{\h(iAi)}(Pi)/C_{(iBi)^{\times}}(Pi)\simeq F_A^{\bar{G}}(P_{\gamma}).\]
\end{prop}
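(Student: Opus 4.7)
The right-hand isomorphism is exactly the map $\bar\Phi$ of Proposition \ref{propnormfus}, so my focus is on establishing the left-hand isomorphism. The plan is to construct a group homomorphism $\Psi\colon N_G(P_\gamma) \to N_{\h(iAi)}(Pi)/C_{(iBi)^\times}(Pi)$, identify its kernel as $C_H(P)$, and prove surjectivity. For $g\in N_G(P_\gamma)$, since ${}^g i = gig^{-1}\in\gamma$, there exists $c\in (B^P)^\times$ with $c({}^g i)c^{-1} = i$, and I set $\Psi(g)$ to be the class of $(cg)i$. A short calculation gives $(cg)i = i(cg)\in (iAi)^\times\cap (iAi)_{\bar g}$ with inverse $ig^{-1}c^{-1}$, and the fact that $c\in B^P$ commutes with $P$ combined with $g\in N_G(P)$ shows that $(cg)$ normalizes $Pi$.

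Next I will verify that $\Psi$ is well-defined and multiplicative. If $c'$ is another element of $(B^P)^\times$ with $c'({}^g i){c'}^{-1} = i$, then $c^{-1}c'$ lies in $(B^P)^\times$ and commutes with $i$ (both facts follow readily from the defining equations), so $i \cdot {}^{g^{-1}}(c^{-1}c') \cdot i$ belongs to $C_{(iBi)^\times}(Pi)$ and equals $((cg)i)^{-1}(c'g)i$. For multiplicativity, given choices $c_1, c_2$ for $g_1, g_2$, the element $c_1\cdot {}^{g_1}c_2$ lies in $(B^P)^\times$ and serves as a valid choice for $g_1 g_2$; using $(c_k g_k)i = i(c_k g_k)$ one computes $(c_1 g_1)i \cdot (c_2 g_2)i = (c_1 \cdot {}^{g_1}c_2 \cdot g_1 g_2)\cdot i$. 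The kernel is identified as follows: if $\Psi(g)$ is trivial, then the grading forces $\bar g = \bar 1$, hence $g\in H$, and commutation of $(cg)i$ with $ui$ for all $u\in P$ forces $gug^{-1} = u$, giving $g\in C_H(P)$; conversely, a direct computation shows every element of $C_H(P)$ lies in the kernel.

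The main obstacle is surjectivity. Given a representative $a\in N_{\h(iAi)}(Pi)\cap (iAi)_{\bar g}$, I must produce $g\in N_G(P_\gamma)$ of degree $\bar g$ with $\Psi(g) \equiv a$ modulo $C_{(iBi)^\times}(Pi)$. Fix any $g_0\in G$ of degree $\bar g$; since $A_{\bar g} = B g_0$ and $a = iai$, we may write $a = i\beta({}^{g_0}i) g_0$ for some $\beta\in B$. The invertibility of $a$ in $iAi$ forces $i$ and ${}^{g_0}i$ to be conjugate primitive idempotents of $B^P$, and the normalizing condition on $Pi$ together with the classical local-idempotent analysis (cf.\ \cite[Theorem 3.1]{Puig2}) applied within the coset $Hg_0$ yields $h\in H$ such that $hg_0\in N_G(P_\gamma)$ and $\Psi(hg_0) \equiv a$ modulo $C_{(iBi)^\times}(Pi)$. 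The $\bar G$-grading is preserved automatically because every element of the coset $Hg_0$ has degree $\bar g$.
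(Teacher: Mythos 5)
Your construction of the homomorphism, the well-definedness and multiplicativity checks, and the identification of the kernel with $C_H(P)$ are correct and match the paper's map $\Theta\colon g\mapsto \overline{ia_1^{-1}g}$ (your $c$ is the paper's $a_1^{-1}$). You actually spell out more detail than the paper here, and that part is fine.

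The surjectivity argument, however, has a genuine gap, and it is precisely the point where the paper's proof does real work. Two concrete problems. First, for an arbitrary $g_0$ of degree $\bar g$, the idempotent ${}^{g_0}i$ lies in $B^{{}^{g_0}P}$, \emph{not} in $B^P$, so the assertion that ``$i$ and ${}^{g_0}i$ are conjugate primitive idempotents of $B^P$'' is not even well-posed; what the invertibility of $a$ in $iAi$ actually gives is a conjugacy $b_0b_1=i$, $b_1b_0={}^{g_0}i$ with $b_0\in iB({}^{g_0}i)$, $b_1\in({}^{g_0}i)Bi$, i.e.\ a conjugacy in $B$, with no $P$-equivariance built in. Second, the appeal to ``the classical local-idempotent analysis (cf.\ \cite[Theorem 3.1]{Puig2}) applied within the coset $Hg_0$'' does not yield the desired $h\in H$ with $hg_0\in N_G(P_\gamma)$: that theorem is a statement about a single block algebra, and there is no evident way to apply it ``within a coset'' of $H$ in $G$. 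Producing such an $h$ from the normalizing condition $a(Pi)a^{-1}=Pi$ is exactly the nontrivial content of surjectivity, and asserting it via an unspecified reduction begs the question.

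The paper handles this differently and you should compare. Given $\bar a\in N_{\h(iAi)}(Pi)/C_{(iBi)^\times}(Pi)$, it first applies the isomorphism $\overline\Phi$ of Proposition~\ref{propnormfus} to obtain $(\varphi,\bar g)\in F_A^{\bar G}(P_\gamma)$, hence a $\bar G$-graded bimodule isomorphism $Ai\simeq(Ai)(\bar g^{-1})_\varphi$, which restricts to $iAi\simeq(iAi)(\bar g^{-1})_\varphi$ as $(\Od P,\Od P)$-bimodules. The locality of $\gamma$ ($\Br_P(i)\neq 0$) forces $\Od P$ to occur as an $(\Od P,\Od P)$-bimodule summand of $iAi$, hence so does $(\Od P)(\bar g)_{\varphi^{-1}}$. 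One then plays this against the $p$-permutation decomposition $i\Od Gi=\bigoplus_{x\in Y}\Od[PxP]$ to identify $(\Od P)(\bar g)_{\varphi^{-1}}\simeq\Od[PxP]$ for some $x\in Y\cap N_G(P_\gamma)$ with $\bar x=\bar g$ and $\varphi=\varphi_x$. This bimodule-theoretic use of the $p$-permutation structure of the source algebra is what replaces the vague coset argument; without some version of it, your proof of surjectivity does not go through.
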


\begin{proof} The second isomorphism is true because in this case, $iAi$ satisfies the hypotheses of Proposition \ref{propnormfus} applied to our algebras.
Let $g\in N_G(P_{\gamma})$, then $gig^{-1}\in \gamma$, hence there is $a_1\in (B^P)^{\times}$ such that ${}^gi={}^{a_1}i$. It follows that $(a_1^{-1}g)i=i(a_1^{-1}g)$.  Next, we define
\[\Theta: N_G(P_{\gamma})\rightarrow N_{\h(iAi)}(Pi)/C_{(iBi)^{\times}}(Pi), \qquad g\mapsto \Theta(g):=\overline{ia_1^{-1}g},\]
which is well defined since $ia_1^{-1}g\in N_{\h(iAi)}(Pi)$.

It is easy to verify that $\Theta$ is a homomorphism and that $\Ker  \Theta=C_H(P)$. (Note that this does not require the assumption that $A$ is a block extension; it holds for any $\bar G$-graded $G$-interior algebra $A$ as in \ref{hyp3.1}.)

 To show that $\Theta$ is surjective, consider the elements
\[\overline{a_g}\in N_{\h(iAi)}(Pi)/C_{(iBi)^{\times}}(Pi), \qquad g\in G.\]
By Proposition \ref{propnormfus} we have  $(\varphi_{a_g},\bar{g})\in F_{A}^{\bar{G}}(P_{\gamma})$, and denote by $\varphi$ the map $\varphi_{a_g}$. By the definition of $F_A^{\bar{G}}(P_{\gamma})$ we obtain
\[Ai\simeq (Ai)(\bar{g}^{-1})_{\varphi}\] as $\bar{G}$-graded $(A,\Od P)$-bimodules. It follows
\[iAi\simeq (iAi)(\bar{g}^{-1})_{\varphi}\]
as $\bar{G}$-graded $(\Od P,\Od P)$-bimodules. Moreover, since $\Br_P(i)\neq 0$ and $A$ has a direct summand  isomorphic to $\Od P$, it follows that $iAi$ has a direct summand isomorphic to $\Od P$ as $(\Od P,\Od P)$-bimodules, hence $iAi$ has a direct summand isomorphic to $(\Od P)(\bar{g})_{\varphi^{-1}}$. But
\[iAi=i\Od Gi=\bigoplus_{x\in Y} \Od[PxP],\]
where $Y$ is a subset of $[P\setminus G / P]$ with $[N_G(P_{\gamma})/PC_H(P)]\subset Y$. Since the elements from $Y\setminus [N_G(P_\gamma)/PC_H(P)]$ are not in $N_G(P)$, arguing by  contradiction, we get
\[(\Od P)(\bar{g})_{\varphi^{-1}}\simeq \Od [PxP]\]
for some $x\in Y\cap N_G(P)$. In conclusion there is $x\in Y\cap N_G(P_{\gamma})$ with $\bar{x}=\bar{g}$ such that $\varphi=\varphi_x.$
\end{proof}

%%%%%%%%%%%%%%%%%%%%%%%%%%%%%%%%%%%%%%%%%%%%%%%%%%%%%%%%%%%%%%%%%%%%%%%%%%%%%
\section{An isomorphism of Clifford extensions}\label{sec5}

As in the previous section, let $A=\mathcal{O}Gb$, $B=\mathcal{O}Hb$; let $b$ be a $G$-invariant  block of $B$, let $P_{\gamma}\leq G_{\{b\}}$ be a local pointed group on $B$, and let $i\in\gamma$.
By Puig \cite[Section 6]{Puig2m}, there are two $k^*$-groups (twisted group algebras) associated with $P_\gamma$. We pursue our module theoretic point of view and we associate to $P_\gamma$ two group graded endomorphism algebras, which turn out to be isomorphic in the group algebra case. These crossed product algebras do not yield $k^*$-groups in general, because we do not assume that $k$ is algebraically closed.

\begin{nim} We begin by setting our notation:
\[E:=E_G^{\bar{G}}(P_{\gamma})\simeq N_G(P_{\gamma})/C_H(P), \qquad \tilde{E}:=\tilde{E}_G^{\bar{G}}(P_{\gamma})\simeq N_G(P_{\gamma})/PC_H(P),\]
\[ F:=F_A^{\bar{G}}(P_{\gamma})\simeq N_{\h(iAi)}(Pi)/C_{( iBi)^{\times}}(Pi),\]
\[ \tilde{F}:=\tilde{F}_A^{\bar{G}}(P_{\gamma})\simeq N_{\h(iAi)} (Pi)/PC_{(iBi)^{\times}}(Pi).\]
By Proposition \ref{propEisoF} we know that there exist isomorphisms $E\simeq F$ and $\tilde{E}\simeq\tilde{F}$.
\end{nim}

\begin{nim} Since $B^P$ is a $C_H(P)$-interior $N_G(P_\gamma)$-algebra, we may consider, as in \ref{s:skew*}, the $E$-graded crossed product $B^P * E$. We also denote
\[\mathcal{E}=\mathcal{E}_G^{\bar G}(P_\gamma):=\End_{B^P * E}(B^P * E\otimes_{B^P}B^Pi)^{\mathrm{op}},\]
which is an $E$-graded crossed product, because $B^Pi$ is an $E$-invariant $B^P$-module.
\end{nim}

\begin{nim} Since $\mathcal{O}P$ is an $F$-algebra, we may consider, as in \ref{s:skew*}, the $F$-graded crossed product
\[C:=\mathcal{O}P * F,\]
which we view as an $F$-interior $F$-graded algebra. We obtain the $F$-graded algebra
\[\mathcal{F}=\mathcal{F}_A^{\bar G}(P_\gamma):=\End_{A\textrm{-Gr}\otimes C^{\mathrm{op}}}(Ai\otimes_{\mathcal{O}P}C)^{\mathrm{op}}.\]
(The notation means that an element $f\in \mathcal{F}$ preserves $\bar G$-gradings, that is, it satisfies $f(Ai_{\bar g})\subseteq Ai_{\bar g}$ for all $\bar g \in \bar G$.)
\end{nim}

The next result generalizes  \cite[Proposition 6.12]{Puig2m}, and also \cite[2.5]{YYZ}, where the case $P\subseteq H$ was considered.

\begin{prop}\label{properonsisofrond} There is an isomorphism of $E\simeq F$-graded $\mathcal{O}$-algebras
\[\mathcal{E}\simeq\mathcal{F}.\]
\end{prop}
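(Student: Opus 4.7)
My plan is to show that $\mathcal{E}$ and $\mathcal{F}$ are both isomorphic, as $E\simeq F$-graded $\mathcal{O}$-algebras, to a common crossed product realized inside $iAi$: the graded subalgebra generated by $iB^Pi$ together with the homogeneous units $\tilde g:=ia_1^{-1}g\in(iAi)^\times$ produced by Proposition \ref{propEisoF}, where $g$ runs over $N_G(P_\gamma)$. The isomorphism $\Theta$ of Proposition \ref{propEisoF} supplies the dictionary between the two gradings.

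First I would verify that the identity components agree. For $\mathcal{E}$ this is the standard identification $\mathcal{E}_1\simeq\End_{B^P}(B^Pi)^{\mathrm{op}}\simeq iB^Pi$. For $\mathcal{F}_1$, the isomorphism $\End_A(Ai)^{\mathrm{op}}\simeq iAi$ restricted to the $\bar G$-degree preserving and right $\mathcal{O}P$-compatible endomorphisms of $Ai$ yields the $P$-fixed part of $iBi$, which equals $iB^Pi$ by the computation $(iBi)^P=iB^Pi$.

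Next, for each $\bar g\in E$ with image $(\varphi,\bar g)\in F$ under the isomorphism of Proposition \ref{propEisoF}, I would identify $\mathcal{E}_{\bar g}$ and $\mathcal{F}_{(\varphi,\bar g)}$ using the unit $\tilde g$. By construction, $\tilde g$ has $\bar G$-degree $\bar g$ and conjugates $P$ via $\varphi$. On the $\mathcal{E}$ side, the map $u_{\bar g}\mapsto\tilde g$ realizes the distinguished homogeneous generator of the crossed product $B^P\ast E$ inside $iAi$, giving $\mathcal{E}_{\bar g}\simeq iB^Pi\cdot\tilde g$. On the $\mathcal{F}$ side, an element of $\mathcal{F}_{(\varphi,\bar g)}$ is determined by its value on $i\otimes 1$, which satisfies the intertwining relation $v\alpha_f=\alpha_f\varphi(v)$ coming from right $\mathcal{O}P$-compatibility; since $\tilde g$ realizes precisely this intertwining, we obtain $\mathcal{F}_{(\varphi,\bar g)}\simeq iB^Pi\cdot\tilde g$ as well.

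The main obstacle, and the only nontrivial check, is the matching of cocycles. Writing $\tilde g\tilde h=c(\bar g,\bar h)\,\widetilde{gh}$ inside $iAi$ for some $c(\bar g,\bar h)\in(iB^Pi)^\times$ (the cocycle intrinsically determined by the chosen section $g\mapsto\tilde g$ of Proposition \ref{propEisoF}), both crossed products inherit this same cocycle through their respective embeddings into $iAi$: for $\mathcal{E}$ directly from the definition of the skew product $B^P\ast E$ via the substitution $u_{\bar g}\mapsto\tilde g$, and for $\mathcal{F}$ because the compatibility with the right $C$-action forces the products of generators of $\mathcal{F}_{(\varphi,\bar g)}$ and $\mathcal{F}_{(\psi,\bar h)}$ to coincide with the product $\tilde g\tilde h$ in $iAi$. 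Hence the component-wise identifications assemble into an algebra isomorphism, yielding the required $\mathcal{E}\simeq\mathcal{F}$ of $E\simeq F$-graded $\mathcal{O}$-algebras.
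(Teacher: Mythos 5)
Your plan — realize both $\mathcal{E}$ and $\mathcal{F}$ inside $iAi$ via the homogeneous units $\tilde g := ia_1^{-1}g$ coming from Propositions \ref{propnormfus} and \ref{propEisoF} — is close in spirit to what the paper actually does: the paper's map $\Psi$ sends $f\in\mathcal{E}_{\bar g}$ to right multiplication on $Ai$ by $f(i)\in iAi$, which is precisely the ``evaluate at $i$ and transport into $iAi$'' mechanism you describe. The problem is that the two verifications which carry all the content are asserted rather than proved.

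First, the claim that the cocycle of $\mathcal{E}$ is obtained ``directly from the definition of the skew product $B^P*\,E$ via the substitution $u_{\bar g}\mapsto\tilde g$'' conflates the skew group algebra $B^P*\,E$, which is built in \ref{s:skew*} with the trivial $2$-cocycle, with the Clifford extension $\mathcal{E}=\End_{B^P*E}(B^P*E\otimes_{B^P}B^Pi)^{\mathrm{op}}$, whose cocycle is a module-theoretic invariant of $B^Pi$ (determined by the chosen isomorphisms $B^Pi\simeq{}^{\bar g^{-1}}(B^Pi)$) and need not be trivial. What would have to be supplied is the algebra homomorphism from $\mathcal{E}\simeq i(B^P*E)i$ into $iAi$ induced on generators by $b\otimes x\mapsto bxi$, together with a check of injectivity and an identification of its image with $\bigoplus_{\bar g\in E}iB^Pi\cdot\tilde g$; none of this is done.

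Second, and more substantively, the sentence ``since $\tilde g$ realizes precisely this intertwining, we obtain $\mathcal{F}_{(\varphi,\bar g)}\simeq iB^Pi\cdot\tilde g$'' leaves unproved exactly the computation that the paper has to perform. For right multiplication by $f(i)$ to define a morphism $Ai\to(Ai)_\varphi$ of right $\Od P$-modules one needs $uf(i)=f(i)\varphi(u)$ for all $u\in P$. This is not a formal consequence of $\varphi$ being conjugation by $\tilde g$: one must observe that $f(i)$, viewed inside the $(A,\Od P)$-bimodule ${}^{\bar g}(Ai)$, is a $\Delta P$-fixed element, deduce ${}^{\bar g}u\cdot f(i)=f(i)\cdot u$, and then use $\varphi(u)={}^{\bar g}u$. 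Without this step the component-wise identifications you describe need not assemble into an algebra homomorphism, so the cocycle matching — which you correctly single out as the only nontrivial check — is not established. Supplying the $\Delta P$-fixedness argument would close the gap, but at that point the argument is the paper's proof in slightly different packaging.
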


\begin{proof} We have $\mathcal{E}_1\simeq\mathcal{F}_1$, since
\[\mathcal{E}_1=(\End_{B^P}(B^Pi))^{\mathrm{op}}\simeq iB^Pi\]
and
\[\mathcal{F}_1=(\End_{A\textrm{-Gr}\otimes(\mathcal{O}P)^{\mathrm{op}}}(Ai))^{\mathrm{op}}=(iBi)^P=iB^Pi.\]

Let $\bar{g}\in E$, where $g\in N_G(P_{\gamma})$, and let $\varphi\in F$ be the correspondent of $\bar g$ through the isomorphism $E\simeq F$, so $\varphi$ is the conjugation by $\overline{ia_1^{-1}g}$, according to the proof of Proposition \ref{propEisoF}. We have the isomorphisms
\begin{align*}\mathcal{F}_{\varphi} &=\left(\End_{A\textrm{-Gr}\otimes C^{\mathrm{op}}}(Ai\otimes_{\Od P} C)_{\varphi}\right)^{\mathrm{op}}  \\
                                         &\simeq \Hom_{A\textrm{-Gr}\otimes (\Od P)^{\mathrm{op}}}(Ai,Ai\otimes_{\Od P}\Od P * \varphi) \\
                                         &\simeq \Hom_{A\textrm{-Gr}\otimes (\Od P)^{\mathrm{op}}}(Ai,(Ai)_{\varphi}) \\
                                         &\simeq \Hom_{A\textrm{-Gr}\otimes (\Od P)^{\mathrm{op}}}((Ai)(\bar{g}^{-1})_{\varphi},(Ai)_{\varphi}),
\end{align*}
where the last isomorphism follows from Definition \ref{defnGfus}. We also have
\begin{align*}\mathcal{E}_{\bar g} &=\left(\End_{B^P * E}(B^P * E\otimes_{B^P}B^Pi)\right)^{\mathrm{op}}  \\
                                                      &\simeq\Hom_{B^P}(B^Pi,\bar g^{-1}\otimes B^Pi)   \\
                                                      &\simeq\Hom_{B^P}(B^Pi,{}^{\bar g^{-1}}(B^Pi));
\end{align*}
note that here, as in \ref{remgeneralGgraded},
\[\bar g\otimes B^Pi\simeq{}^{\bar g}(B^Pi),\]
as left $B^P$-modules, with the multiplication  given by
\[b\cdot (b'i):={\ }^{\bar g}b~b'i=gbg^{-1}b'i\] for all $b,b'\in B^P$. Next, we define an $E\simeq F$-graded homomorphism
\[\Psi:\mathcal{E}\rightarrow \mathcal{F}\]
as follows; let $f\in \mathcal{E}$, so
\[f:B^Pi\rightarrow {}^{\bar g}(B^Pi),\quad b\mapsto f(bi)=b\cdot f(i)={\ }^{\bar g}b\ f(i).\]
It is easy to check that
\[\Psi(f):Ai\rightarrow (Ai)_{\varphi},\quad a\mapsto \Psi(f)(a)=af(i)\]
is a homomorphism of $\bar G$-graded left $A$-modules and that $\Psi$ is an $E\simeq F$-graded homomorphism. We verify that
$\Psi(f)$ is a homomorphism of right $\Od P$-modules. Indeed, for any $u\in P$ and $a\in Ai$, we have
\[\Psi(f)(au)=auf(i) \qquad \textrm{and}\qquad \Psi(f)(a)\varphi(u)=af(i)\varphi(u).\]
It is enough to prove that \[uf(i)=f(i)\varphi(u).\]
Here we regard ${}^{\bar g}(Ai)$ as an $(A,\Od P)$-bimodule, with scalar multiplication  given by
\[a\cdot a'u:={\ }^{\bar g}a~a'u,\]
while ${}^{\bar g}(B^Pi)$ is a right $\Od P$-submodule of ${}^{\bar g}(Ai)$. We know that $f(i)$ is an element of
${}^{\bar g}(B^Pi)$, but $f(i)$ can be viewed in ${}^{\bar g}(Ai)$, and here it is fixed by $\Delta P\subseteq A\otimes (\Od P)^{\mathrm{op}}$; thus
\[u\cdot f(i)u^{-1}=f(i),\] and it follows that
\[{}^{\bar g}u \ f(i)=f(i)u.\]
But $\bar g$ corresponds to $\varphi$ in the isomorphism $E\simeq F$, so $g$ is chosen such that we have $\varphi(u)={}^{\bar g}u$;  this concludes the proof.
\end{proof}

\begin{rem} Under the more general assumptions of \ref{hyp3.1}, the map $\Psi$ is an injective homomorphism, inducing an isomorphism of $E$-graded algebras between $\mathcal{E}$ and $\mathcal{F}_E=\bigoplus_{\phi\in E}\mathcal{F}_\phi$.
\end{rem}

\begin{nim} \label{barcalEF} We now pass to the residual Clifford extensions of the indecomposable module $B^Pi$ and of the indecomposable $\bar G$-graded $(A,\mathcal{O}P)$-bimodule $Ai$.  Consider the $E$-graded $k$-algebra
\[\overline{\mathcal{E}}:=\End_{B(P_{\gamma}) * E}(B(P_{\gamma}) * E\otimes V_{\gamma})^{\mathrm{op}},\]
where  $V_{\gamma}$ is the  unique simple module over the simple quotient $B(P_\gamma)$ of $B^P$ associated with the point $\gamma$.
Consider also the $F$-graded $k$-algebra
\[\overline{\mathcal{F}}:=\mathcal{F}/J_{gr}(\mathcal{F}).\]
\end{nim}

\begin{cor} There is an isomorphism of $E\simeq F$-graded $k$-algebras
\[\overline{\mathcal{E}}\simeq\overline{\mathcal{F}}.\]
\end{cor}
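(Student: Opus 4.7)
The plan is to combine the isomorphism $\mathcal{E}\simeq\mathcal{F}$ established in Proposition~\ref{properonsisofrond} with the general principle recalled in \ref{projcover}, which identifies the quotient of the endomorphism algebra of a lifted projective cover by its graded Jacobson radical with the endomorphism algebra obtained from the corresponding simple module.

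First, I would apply \ref{projcover} in the following setting: take the ambient graded algebra to be the $E$-graded crossed product $B^P * E$, its identity component $B^P$ (viewed as a $C_H(P)$-interior $N_G(P_\gamma)$-algebra as in \ref{s:skew*}), and the simple $B^P$-module $V_\gamma$ with projective cover $B^P i\to V_\gamma\to 0$. The hypotheses to check are: $V_\gamma$ is $E$-invariant, which follows because $N_G(P_\gamma)$ stabilises the local point $\gamma$ and hence the unique simple $B(P_\gamma)$-module $V_\gamma$; and $B^P/\mathrm{ann}_{B^P}(V_\gamma)\simeq B(P_\gamma)$ as an $E$-interior $k$-algebra, since $V_\gamma$ is faithful over the simple quotient $B(P_\gamma)$ of $B^P$. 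Granting this, \ref{projcover} yields an isomorphism of $E$-graded $k$-algebras
\[\mathcal{E}/J_{\mathrm{gr}}(\mathcal{E})\simeq \End_{B(P_\gamma)*E}\bigl(B(P_\gamma)*E\otimes V_\gamma\bigr)^{\mathrm{op}}=\overline{\mathcal{E}}.\]

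Next, Proposition~\ref{properonsisofrond} supplies an isomorphism $\Psi:\mathcal{E}\to\mathcal{F}$ of $E\simeq F$-graded $\mathcal{O}$-algebras. Any isomorphism of graded algebras carries the graded Jacobson radical onto the graded Jacobson radical, so $\Psi$ induces an $E\simeq F$-graded isomorphism $\mathcal{E}/J_{\mathrm{gr}}(\mathcal{E})\simeq \mathcal{F}/J_{\mathrm{gr}}(\mathcal{F})=\overline{\mathcal{F}}$. Chaining this with the previous isomorphism delivers $\overline{\mathcal{E}}\simeq\overline{\mathcal{F}}$ as $E\simeq F$-graded $k$-algebras.

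The only step that requires some care is the verification that the $E$-interior algebra structures line up in Step 1: one must check that the $N_G(P_\gamma)$-action on $B^P$ descends, through the Brauer map and the projection onto the simple component indexed by $\gamma$, to an action of $E=N_G(P_\gamma)/C_H(P)$ on $B(P_\gamma)$ in such a way that
\[(B^P*E)\,/\,\mathrm{ann}_{B^P}(V_\gamma)(B^P*E)\;\simeq\;B(P_\gamma)*E\]
as $E$-graded algebras. This is essentially the content of the $K$-triviality discussion in \ref{s:skew*} applied to $K=C_H(P)$, and once it is in place the rest of the argument is formal.
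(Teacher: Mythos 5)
Your proposal is correct and follows essentially the same route as the paper: both arguments use \ref{projcover} to identify $\mathcal{E}/J_{\mathrm{gr}}(\mathcal{E})$ with $\overline{\mathcal{E}}$ (the paper does this by computing $\mathcal{E}_1/J(\mathcal{E}_1)\simeq\End_{B(P_\gamma)}(V_\gamma)$, you by checking the hypotheses of \ref{projcover} directly) and then conclude via the graded isomorphism $\mathcal{E}\simeq\mathcal{F}$ of Proposition \ref{properonsisofrond}. The extra care you take with the $E$-interior structure on $B(P_\gamma)$ is a reasonable elaboration of what the paper leaves implicit.
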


\begin{proof} We have the isomorphisms
\[\mathcal{E}_1/J(\mathcal{E}_1)\simeq \End_{B^P}(B^Pi/J(B^Pi))\simeq \End_{B^P}(V_{\gamma})^{\mathrm{op}}\simeq\End_{B(P_{\gamma})}(V_{\gamma}),\]
so by \ref{projcover} we obtain
\[\mathcal{E}/J_{gr}(\mathcal{E})\simeq\overline{\mathcal{E}}.\]
The statement now follows immediately by Proposition \ref{properonsisofrond}.
\end{proof}

\begin{rem} Observe that the $C_H(P)$-interior $N_G(P_\gamma)$ algebra $B(P_\gamma)$ is actually $P$-trivial, so as in \ref{s:skew*}, we may construct an $\tilde E$-graded endomorphism algebra $\widetilde{\mathcal{E}}$, whose inflation is $\overline{\mathcal{E}}$. Similarly, the  $C_{(iBi)^{\times}}(Pi)$-interior $N_{\h(iAi)}(Pi)$-algebra $iB^Pi/J(iB^Pi)$ is $P$-trivial, so we have an $\tilde F$-graded crossed product $\widetilde{\mathcal{F}}$, whose inflation is $\overline{\mathcal{F}}$.

\end{rem}

%%%%%%%%%%%%%%%%%%%%%%%%%%%%%%%%%%%%%%%%%%%%%%%%%%%%%%%%%%%%%%%%%%%%%%%%%%%%%%%%%%%%%%
\section{Local Clifford extensions}\label{sec6}

The employment of the Brauer construction gives yet another Clifford extension. We keep the notations and assumptions of the previous section.

\begin{nim} \label{barcalE(P)} The Brauer map
\[\operatorname{Br}_P\!:B^P\rightarrow B(P)\]
is a homomorphism of $C_H(P)$-interior $N_G(P)$-algebras, and it induces the isomorphism
\[B(P_{\gamma}):=B^P/m_{\gamma}\rightarrow B(P)/m_{\Br_P(\gamma)}\]
of $C_H(P)$-interior $N_G(P_{\gamma})$-algebras. Thus we obtain the commutative diagram
\[\begin{xy} \xymatrix{ B^P \ar[r]\ar[d]&B(P)\ar[d]\\
 B(P_{\gamma})\ar[r]^{\simeq~~~~~~~}   &B(P)/m_{\Br_P(\gamma)}}\end{xy}\]
The point $\Br_P(\gamma)$ belongs to the uniquely determined block $b_\gamma$ of $B(P)$. Note that in our group algebra case we have
\[B(P)b_{\gamma}\simeq kC_H(P)b_{\gamma},\]
and
\[B(P)b_{\gamma} * E\simeq
kN_G(P_{\gamma})b_{\gamma}.\]
Denote by $m^{*}$ the ideal $m_{\Br_P(\gamma)}\cap B(P)b_{\gamma}$, which is an $E$-invariant ideal of $B(P)b_{\gamma}$. The simple module $V_{\gamma}$ can also be viewed as a simple $B(P)b_{\gamma}/m^*$-module through the isomorphism $B(P_{\gamma})\simeq B(P)b_{\gamma}/m^*$.
Consider the $E$-graded $k$-algebra
\[\overline{\mathcal{E}}(P):= \End_{(B(P)b_{\gamma}/m^*) * E}\left((B(P)b_{\gamma}/m^*) * E\otimes_{B(P)b_{\delta}/m^*}V_{\gamma}\right)^{\mathrm{op}}.\]

\end{nim}

\begin{prop}\label{prop5.1} With the above notations, there is an isomorphism
\[\overline{\mathcal{E}}\simeq \overline{\mathcal{E}}(P)\]
of $E$-graded algebras.
\end{prop}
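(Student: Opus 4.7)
The proof should essentially be a matter of tracing through the definitions: both $\overline{\mathcal{E}}$ and $\overline{\mathcal{E}}(P)$ are built from the same recipe (take the $E$-graded crossed product, tensor with $V_\gamma$, form the opposite endomorphism algebra), applied to two $C_H(P)$-interior $N_G(P_\gamma)$-algebras that are already known to be isomorphic.

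The plan is as follows. First, I would invoke the commutative diagram in \ref{barcalE(P)}: the Brauer homomorphism induces an isomorphism
\[\theta:B(P_\gamma)\xrightarrow{\ \sim\ } B(P)b_\gamma/m^{*}\]
of $C_H(P)$-interior $N_G(P_\gamma)$-algebras. Since $E=N_G(P_\gamma)/C_H(P)$ acts on each side through the corresponding outer action, the crossed product construction of \ref{s:skew*} is functorial, and $\theta$ extends uniquely to an isomorphism of $E$-graded $C_H(P)$-interior algebras
\[\theta * E: B(P_\gamma) * E \xrightarrow{\ \sim\ }\bigl(B(P)b_\gamma/m^{*}\bigr) * E\]
sending the identity component isomorphically via $\theta$ and fixing the $E$-grading componentwise.

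Next, I would transport the module $V_\gamma$ across $\theta$. By the very definition of $m^{*}$, the isomorphism $\theta$ identifies the unique simple $B(P_\gamma)$-module associated with $\gamma$ with the unique simple $B(P)b_\gamma/m^{*}$-module, both denoted $V_\gamma$. Extending scalars along $\theta * E$ therefore gives an isomorphism of $E$-graded left modules
\[B(P_\gamma) * E \otimes_{B(P_\gamma)} V_\gamma \;\simeq\; \bigl(B(P)b_\gamma/m^{*}\bigr) * E \otimes_{B(P)b_\gamma/m^{*}} V_\gamma.\]
Taking opposite endomorphism algebras in the category of $E$-graded left modules yields the required isomorphism $\overline{\mathcal{E}}\simeq\overline{\mathcal{E}}(P)$ of $E$-graded $k$-algebras.

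I do not expect any serious obstacle: the content is entirely bookkeeping. The only point that deserves a sentence of justification is that $\theta$ is equivariant for the $N_G(P_\gamma)$-action (so that it survives the crossed product construction), which is precisely the statement that the Brauer map is a homomorphism of $N_G(P_\gamma)$-algebras, and that the maximal ideals $m_\gamma$, $m_{\Br_P(\gamma)}$, $m^{*}$ are $N_G(P_\gamma)$-invariant because $P_\gamma$ is. Everything else follows from the universality of the crossed product and of the induced module functor.
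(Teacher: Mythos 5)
Your proposal is correct and follows essentially the same route as the paper: the paper's proof likewise passes from the isomorphism $B(P_\gamma)\simeq B(P)b_\gamma/m^*$ of $C_H(P)$-interior $N_G(P_\gamma)$-algebras to an isomorphism of the $E$-graded crossed products, and then concludes that the graded endomorphism algebras of the modules induced from $V_\gamma$ are isomorphic. Your write-up merely makes explicit the bookkeeping (equivariance of the Brauer map, invariance of the ideals, transport of $V_\gamma$) that the paper compresses into the word ``clearly''.
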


\begin{proof} We clearly have the isomorphisms
\[B(P_{\gamma}) * E\simeq (B(P)b_{\gamma}/m^*) * E\simeq (B(P)b_\gamma * E)/m^* * E\]
of $E$-graded algebras, and it follows that the $E$-graded endomorphism of $k$-algebras associated to $V_{\gamma}$ are isomorphic.
\end{proof}

\section{Embeddings and tensor products}\label{sec7}

Our constructions behave well with respect to embeddings and tensor products of $P$-algebras. We record these properties in this section, leaving the proofs to the reader.

\begin{nim}\label{s:embedding} Let $A$ be as in \ref{hyp3.1}, and let $A'\to A$ be an embedding of $\bar G$-graded $P$-algebras. We may assume that $A'=eAe$, where $e\in B^P$ is an idempotent such that $ei=ie=i$. Then, since the map $(eBe)^\times\to B^\times$ sending $a$ to $a+(1-e)$ is a group homomorphism, the primitive idempotent $i\in \gamma\subset B^P$ determines a point $\gamma'$ of $P$ on $B'=eBe$.

We denote $E'=E_G^{\bar G}(P_{\gamma'})$, $F'=F_A^{\bar G}(P_{\gamma'})$, $\mathcal{E}'=\mathcal{E}_G^{\bar G}(P_{\gamma'})$ and $\mathcal{F}'=\mathcal{F}_A^{\bar G}(P_{\gamma'})$. With these notations, \cite[Proposition 6.15, Proposition 6.18]{Puig2m} generalize as follows.
\end{nim}

\begin{prop}\label{p:embedding} The embedding $A'\to A$  of $\bar G$-graded $P$-algebras induces the group isomorphisms $E\simeq E'$ and $F\simeq F'$ and the commutative diagram
\[\begin{xy}
\xymatrix{ \overline{\mathcal{F}}\ar[d]^{\simeq}    &  \overline{\mathcal{E}} \ar[l]\ar[d]^{\simeq} \ar[r]^{\simeq}  & \overline{\mathcal{E}}(P) \ar[d]^{\simeq} \\
           \overline{\mathcal{F}'}         &   \overline{\mathcal{E}'} \ar[l]    \ar[r]^{\simeq} & \overline{\mathcal{E}'}(P)
}\end{xy}\]
of $F$-graded $k$-algebras.
\end{prop}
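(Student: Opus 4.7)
The plan is to reduce the entire proposition to the structural observation that, because $i=ei=ie$, one has the equalities
\[ iAi = i(eAe)i = iA'i \qquad \text{and} \qquad iBi = iB'i. \]
Thus every group and every graded endomorphism algebra attached to $P_\gamma$ or $P_{\gamma'}$, once written in terms of these ``internal'' algebras, coincides with its primed counterpart, and the proposition reduces to a naturality check.

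I would first handle the group isomorphisms. Applying Proposition \ref{propnormfus} both to $A$ and to $A'$ yields
\[ F \simeq N_{\h(iAi)}(Pi)/C_{(iBi)^{\times}}(Pi) = N_{\h(iA'i)}(Pi)/C_{(iB'i)^{\times}}(Pi) \simeq F', \]
so $F\simeq F'$. The isomorphism $E\simeq E'$ then follows either from Proposition \ref{propEisoF} (in the group algebra case, composed with $F\simeq F'$), or directly from the fact that the embedding induces a bijection between points of $P$ on $B$ lying below $e$ and points of $P$ on $B'$, compatible with $G$-conjugation of primitive idempotents, so that under the identification $\gamma\leftrightarrow\gamma'$ one has $N_G(P_\gamma)=N_G(P_{\gamma'})$.

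Next I would construct the three vertical isomorphisms of the diagram, column by column. For the middle column, the embedding identifies the simple quotients $B(P_\gamma)\simeq B'(P_{\gamma'})$ cut out by $i$ as $C_H(P)$-interior $N_G(P_\gamma)$-algebras, together with the common simple module $V_\gamma=V_{\gamma'}$; hence the crossed products $B(P_\gamma)*E$ and $B'(P_{\gamma'})*E'$ are isomorphic, and so are the $E$-graded endomorphism algebras $\overline{\mathcal{E}}$ and $\overline{\mathcal{E}'}$. The leftmost column $\overline{\mathcal{F}}\simeq\overline{\mathcal{F}'}$ is then almost tautological: the graded components of $\mathcal{F}$ unfold into $\Hom$-spaces between summands $(Ai)(\bar g^{-1})_\varphi$ depending only on $iAi$, $iBi$ and the $F$-crossed product $\mathcal{O}P*F$, all of which coincide with their $A'$-analogues. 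For the rightmost column I apply $\Br_P$: the idempotent $\Br_P(e)\in B(P)^P$ satisfies $\Br_P(i)\le\Br_P(e)$, so $B'(P)=\Br_P(e)B(P)\Br_P(e)$, and the embedding restricts to an isomorphism $B(P)b_\gamma/m^{*}\simeq B'(P)b_{\gamma'}/(m')^{*}$ of $C_H(P)$-interior $N_G(P_\gamma)$-algebras, which lifts to $\overline{\mathcal{E}}(P)\simeq\overline{\mathcal{E}'}(P)$. Commutativity of the diagram is then a formal check, since each of the three horizontal isomorphisms (from Proposition \ref{properonsisofrond}, its Corollary, and Proposition \ref{prop5.1}) is canonically built from $i$, $\Br_P$ and $V_\gamma=V_{\gamma'}$, hence is manifestly natural with respect to the embedding.

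The main obstacle I anticipate is the bookkeeping needed to define $E'$ rigorously: under the general hypotheses of \ref{hyp3.1}, the subalgebra $A'=eAe$ is a priori only a $P$-algebra, since $e$ need not be $G$-fixed, so $N_G(P_{\gamma'})$ must be interpreted through the bijection of points $\gamma'\leftrightarrow\gamma$ furnished by the embedding. Verifying carefully, via the group homomorphism $(eB^Pe)^{\times}\to(B^P)^{\times}$, $a\mapsto a+(1-e)$, that the two normalizers coincide set-theoretically as subsets of $N_G(P)$ is the step most in need of attention; once this identification is fixed, the remaining arguments propagate smoothly.
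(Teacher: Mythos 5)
Your central observation — that $i=ei=ie$ forces the literal equalities $iAi=iA'i$ and $iBi=iB'i$, so that everything built from these corners must agree — is the right one, and since the paper explicitly leaves this proof to the reader, your overall plan is essentially the only reasonable route. Your treatment of $F\simeq F'$ via Proposition~\ref{propnormfus} is clean, your route to $E\simeq E'$ is fine, and your argument for the left column $\overline{\mathcal{F}}\simeq\overline{\mathcal{F}'}$ (all graded components unfold into $\Hom$-spaces that are computed inside $iAi$) is correct.

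However, your argument for the middle and right columns contains a genuine error. You claim the embedding ``identifies the simple quotients $B(P_\gamma)\simeq B'(P_{\gamma'})$ \dots together with the common simple module $V_\gamma=V_{\gamma'}$,'' and in the right column that $B(P)b_\gamma/m^*\simeq B'(P)b_{\gamma'}/(m')^*$. Neither holds in general. The algebra $B'^P=eB^Pe$ is merely a corner of $B^P$, so $B(P_\gamma)=B^P/m_\gamma$ and $B'(P_{\gamma'})=eB^Pe/m_{\gamma'}$ are simple algebras with the same endomorphism division ring but in general different matrix sizes; they are Morita equivalent, not isomorphic. Correspondingly, $V_{\gamma'}\simeq eV_\gamma$, which is a proper subspace of $V_\gamma$ unless $e$ acts as the identity on $V_\gamma$. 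So your isomorphisms of the crossed products $B(P_\gamma)*E\simeq B'(P_{\gamma'})*E'$ and of the Brauer quotients do not exist. The fix is to run the middle column through the \emph{unreduced} algebras: since $\mathcal{E}_{\bar g}\simeq\Hom_{B^P}(B^Pi,{}^{\bar g^{-1}}(B^Pi))\simeq i\cdot{}^{\bar g^{-1}}(B^Pi)$ depends only on $iB^Pi=iB'^Pi$, one gets $\mathcal{E}\simeq\mathcal{E}'$ directly, hence $\overline{\mathcal{E}}\simeq\overline{\mathcal{E}'}$ by~\ref{projcover}, and then $\overline{\mathcal{E}}(P)\simeq\overline{\mathcal{E}'}(P)$ follows from Proposition~\ref{prop5.1} rather than from a (false) isomorphism of Brauer quotients. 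With that replacement the naturality argument for commutativity goes through as you describe, and your concluding remark about interpreting $N_G(P_{\gamma'})$ through the point correspondence is a fair reading of the paper's statement.
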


\begin{nim} \label{s:tensor} The following setting will also be employed in the next section. Let $\omega:G\to \bar G$ and $\omega':G'\to \bar G$ be group epimorphisms such that $\Ker \omega=H$ and $\Ker \omega'=H'$.  Let \[\ddot{G}=(\omega\times\omega')^{-1}(\Delta(\bar G))=\{(g,g')\in G\times G' \mid \omega(g)=\omega(g')\},\]
so $\mathcal{O}G''$ is the diagonal subalgebra of the $\bar G\times\bar G$-graded algebra $\mathcal{O}G\otimes\mathcal{O}G'$ (see \ref{s:diagonal}).

Let $b\in(\Od H)^G$ and  $b'\in(\Od H')^{G'}$ be $\bar G$-invariant blocks of $\Od H$, respectively $\Od H'$. As in the previous sections, let $A=\Od Gb$ and  $B=\Od Hb$, and we also denote $A':=\Od G'b'$ and  $B':=\Od H'b'$. Let
\[\ddot{\Delta}=(b\otimes b')\mathcal{O}\ddot{G}\]
be the diagonal subalgebra of $A\otimes A'$.
\end{nim}

\begin{nim} \label{s:tensorpt} We assume that the $p$-group $P$ is a common subgroup of $G$ and $G'$ such that $\omega(P)=\omega'(P)$ in $\bar G$. Let $P_\gamma$ and $P_{\gamma'}$ be local pointed groups on $B$ and $B'$, respectively. Let $\gamma''=\gamma\times\gamma'$ be the unique local point of $P$ on $B\otimes B'$ such that $\Br_P(\gamma)\otimes\Br_{P}(\gamma') \subseteq\Br_P(\gamma\times\gamma')$ (see \cite[Proposition 5.6]{Puig3}), and the group graded algebras $\overline{\mathcal{E}'}$, $\overline{\mathcal{F}'}$ and $\overline{\mathcal{E}'}(P)$     as in \ref{barcalEF} and \ref{barcalE(P)}. We do the same for $A''$ and $P_{\gamma''}$.
\end{nim}

The following generalization of \cite[Proposition 5.11]{Puig3} essentially follows from the Hom-Tensor interchange property and \cite[Section 5]{CM2}.

\begin{prop}\label{p:tensor} Let $K:=F\cap F'$. Then $K\subseteq F''$, and there is an there is an isomorphism of $K$-graded algebras between $\overline{\mathcal{F}''}_K$ and the diagonal subalgebra of $\overline{\mathcal{F}}_K\otimes \overline{\mathcal{F}'}_K$, compatible with the isomorphisms given by Proposition \ref{properonsisofrond}  and Proposition \ref{prop5.1}, that is, we have the commutative diagram
\[\begin{xy}
\xymatrix{ \overline{\mathcal{F}''}_K \ar[d]^{\simeq}    &  \overline{\mathcal{E}''}_K \ar[l]_\simeq\ar[d]^{\simeq} \ar[r]^{\simeq}  & \overline{\mathcal{E}''}(P)_K \ar[d]^{\simeq} \\
           \Delta(\overline{\mathcal{F}}_K\otimes\overline{\mathcal{F}'}_K)         &   \Delta(\overline{\mathcal{E}}_K\otimes\overline{\mathcal{E}'}_K) \ar[l]_\simeq    \ar[r]^{\simeq} & \Delta(\overline{\mathcal{E}}(P)_K\otimes\overline{\mathcal{E}'}(P)_K)
}\end{xy}\]
of isomorphisms of $K$-graded $k$-algebras.
\end{prop}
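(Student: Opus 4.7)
The plan is to reduce the proposition to a Hom–Tensor interchange at the level of homogeneous components, then pass to residues, and finally chase the diagram. Throughout I would fix primitive idempotents $i \in \gamma$, $i' \in \gamma'$, and $i'' \in \gamma''$, observing that $i''$ appears as a summand of $i\otimes i'$ in $(B\otimes B')^P = (\ddot{B})^P$ because $\Br_P(\gamma)\otimes \Br_P(\gamma')\subseteq \Br_P(\gamma'')$.

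First I would prove the inclusion $K\subseteq F''$. Given $(\varphi,\bar g)\in K$, Definition \ref{defnGfus} provides isomorphisms $Ai\simeq (Ai)(\bar g^{-1})_\varphi$ and $A'i'\simeq (A'i')(\bar g^{-1})_\varphi$ of $\bar G$-graded bimodules. Tensoring over $\mathcal{O}$ with the diagonal right $\mathcal{O}P$-action (via $\Delta P\hookrightarrow P\times P$) and restricting the $\bar G\times\bar G$-grading to the diagonal yields an isomorphism of $\bar G$-graded $(\ddot\Delta,\mathcal{O}P)$-bimodules $\ddot\Delta(i\otimes i')\simeq \ddot\Delta(i\otimes i')(\bar g^{-1})_\varphi$. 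Decomposing $i\otimes i'$ in $(\ddot{B})^P$ and applying Krull–Schmidt extracts an indecomposable summand $A''i''$, which must be isomorphic to its $(\bar g^{-1},\varphi)$-twist, giving $(\varphi,\bar g)\in F''$.

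Second, for the main isomorphism, I would compute each $\phi$-homogeneous component ($\phi\in K$) of $\mathcal{F}''$ using the description
\[\mathcal{F}''_\phi \simeq \Hom_{A''\text{-Gr}\otimes (\mathcal{O}P)^{\mathrm{op}}}\!\bigl(A''i'',(A''i'')_\varphi\bigr).\]
Since $A''i''$ is a direct summand of the $\Delta P$-restriction of $Ai\otimes_{\mathcal{O}} A'i'$ that also lives in the diagonal $\bar G$-grade, the Hom–Tensor interchange identifies $\mathcal{F}''_\phi$ with the diagonal $\phi$-component of $\mathcal{F}_\phi\otimes \mathcal{F}'_\phi$. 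Passing to residual Clifford extensions (quotient by graded Jacobson radical) is harmless here: $iB^Pi$, $i'B'^{P}i'$ and $i''(B\otimes B')^{P}i''$ are all local, so each identity component modulo its radical is a division $k$-algebra, and the tensor product of local rings stays local up to the diagonal identification, making the three radicals correspond.

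Third, I would verify the commutativity of the two squares. The left square commutes because the map $\Psi$ constructed in Proposition \ref{properonsisofrond} sends $f\in \mathcal{E}_\phi$ to the right-multiplication-by-$f(i)$ homomorphism in $\mathcal{F}_\phi$, and this assignment is manifestly multiplicative under the tensor/diagonal identifications of both sides. The right square commutes because, by \cite[Section 5]{CM2}, the (extended) Brauer construction is compatible with tensor products of $P$-algebras and with the passage to blocks $b_\gamma$, so the isomorphisms of Proposition \ref{prop5.1} applied to $A$, $A'$ and to $A''$ are intertwined by the diagonal embedding. The most delicate step — and the main obstacle — is ensuring that the restriction to $K$ is both necessary and sufficient: necessary, because a homogeneous piece of $\overline{\mathcal{F}''}$ outside $K$ need not split as a tensor; and sufficient, because the Hom–Tensor interchange for graded Hom spaces only yields the diagonal subalgebra, not the full $(K\times K)$-graded tensor. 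I would handle this by doing the calculation component-by-component and only afterwards assembling the $K$-graded isomorphism.
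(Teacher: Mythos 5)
The paper gives no proof of Proposition~\ref{p:tensor}: the preceding sentence only says that it ``essentially follows from the Hom-Tensor interchange property and \cite[Section 5]{CM2},'' and the section's introduction leaves the proofs to the reader. Your overall strategy (Hom--Tensor interchange on homogeneous components, then pass to residues, then chase the two squares) matches that hint, so the architecture is right. Two points, however, are not yet correct as written.

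First, in the verification of $K\subseteq F''$, the step ``which must be isomorphic to its $(\bar g^{-1},\varphi)$-twist'' does not follow from Krull--Schmidt alone: a priori the twist could permute the indecomposable summands of $\ddot\Delta(i\otimes i')$ nontrivially. You need to invoke the characterization of $\gamma''$ recalled in \ref{s:tensorpt} --- it is the \emph{unique} local point of $P$ on $B\otimes B'$ satisfying $\Br_P(\gamma)\otimes\Br_P(\gamma')\subseteq\Br_P(\gamma'')$ --- and the fact that $(\varphi,\bar g)\in F\cap F'$ stabilizes both $\gamma$ and $\gamma'$ (via Proposition~\ref{propEisoF} and $N_G(P_\gamma)$, $N_{G'}(P_{\gamma'})$). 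Then the twist sends $\gamma''$ to a local point with the same Brauer-compatibility property, hence to $\gamma''$ itself, and only then does Krull--Schmidt give $A''i''\simeq (A''i'')(\bar g^{-1})_\varphi$.

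Second, the assertion that ``the tensor product of local rings stays local'' is false in general, and this is precisely the delicate point masked by your parenthetical ``up to the diagonal identification.'' Indeed $iB^Pi\otimes i'B'^Pi'$ need not be local --- that is exactly why $i\otimes i'$ may split and one has to pick the summand $i''$. What you actually need for the identity component of $\overline{\mathcal{F}''}_K\simeq\Delta(\overline{\mathcal{F}}_K\otimes\overline{\mathcal{F}'}_K)$ is the isomorphism $\overline{\mathcal{F}''}_1\simeq\overline{\mathcal{F}}_1\otimes\overline{\mathcal{F}'}_1$, i.e.\ $i''(B\otimes B')(P_{\gamma''})i''\simeq iB(P_\gamma)i\otimes i'B'(P_{\gamma'})i'$, which is not a generality about radicals but the content of \cite[Proposition 5.6]{Puig3} (the source the paper cites for $\gamma\times\gamma'$) combined with the behavior of $\Br_P$ on tensor products. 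Once this identification of $1$-components is in place, your Hom--Tensor computation of the $\phi$-components, and your observations about $\Psi$ and about the compatibility of the Brauer construction with tensor products from \cite[Section 5]{CM2}, do give the claimed $K$-graded isomorphism and the commutativity of both squares.
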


%%%%%%%%%%%%%%%%%%%%%%%%%%%%%%%%%%%%%%%%%%%%%%%%%%%%%%%%%%%%%%%%%%%%%%%%%%%%%%%%
\section{Graded basic Morita equivalences and the invariance of $\mathcal{E}$}\label{sec8}

In this section we consider basic Morita equivalences between block extensions, as discussed in \cite{CM} and Section \ref{sec1}, and we give a proof of the main result of the paper, Theorem \ref{thm6.1}.

\begin{nim} \label{n:8.1} We will use the properties of the extended Brauer quotient $\bar N_B^K(Q)$ for $H$-interior $G$-algebras (see \cite{CT}) and $\bar N_A^K(Q)$ for $G$-graded $H$-interior $G$-algebras (see \cite{CM2}), where, in both cases, $K$ is a suitable subgroup of $\Aut^{\bar G}(Q)$. Recall that, in particular, $\bar N_{\mathcal{O}G}^E(Q)\simeq kN_G(Q_\delta)$ as $E$-graded algebras, with $1$-component $kC_H(Q)$ as $C_H(Q)$-interior $N_G(Q_\delta)$-algebras.
\end{nim}

Our main result here improves \cite[Theorem 6.9]{CM}, because there the grading group is the subgroup $N^K_G (Q_\delta)/C_H(Q)$ (defined in \cite[5.1]{CM}) of $E= N_G(Q_{\delta})/C_H(Q)$.

\begin{proof}(\textbf{Proof of Theorem \ref{thm6.1}})  1) The isomorphism holds by the argument of \cite[1.17]{KuPu} (which can be easily adapted to our definition of $\bar G$-fusions) and by  Proposition \ref{p:embedding}.

2) Note that Proposition \ref{propEisoF} and i) provide the isomorphisms
\[E=N_G(Q_{\delta})/C_H(Q) \simeq F_{A}^{{\bar G}}(Q_{\delta})\simeq F_{A'}^{{\bar G}}(Q'_{\delta'})\simeq N_{G'}(Q'_{\delta'})/C_{H'}(Q')=E'.\] Consider  the block $b_{\delta}$ of $kC_H(Q)$ determined by $\delta.$ One can easily see that $b_{\delta}$ is $N_G(Q_{\delta})$-invariant, hence it lies in $kC_H(Q)^{N_G(Q_{\delta})}$, where it is still a primitive idempotent. The same  holds for the block $b_{\delta'}$ of $kC_{H'}(Q')$ determined by $\delta',$ so $b_{\delta'}$ is a primitive idempotent in $kC_{H'}(Q')^{N_{G'}(Q'_{\delta'})}$.

Let $T$ be   a defect group in  $ N_G(Q_{\delta})$ of  $b_{\delta}.$ By using the epimorphism
\[\Br_Q:(\Od H)^{N_G(Q_{\delta})}_T\to (kC_H(Q))^{N_G(Q_{\delta})}_T\] we determine a unique point $\beta\subseteq (\Od H)^{N_G(Q_{\delta})}$ such that $Q_{\delta}\leq N_G(Q_{\delta})_{\beta}\leq G_{\{b\}}.$ By the Mackey decomposition we have
\[(\Od H)_P^G\subseteq \sum_{x\in [N_G(Q_{\delta})\setminus G/ P]}(\Od H)_{N_G(Q_{\delta})\cap P^x}^{N_G(Q_{\delta})},\] so the defect group $T$ of $\beta$ lies in $P^x$, for some $x\in G.$ Hence, by replacing each $P_{\gamma}$ and $Q_{\delta}$ by a $G$-conjugate such that we still have $Q_{\delta}\leq P_{\gamma},$  we may assume that $T\leq P$, and in fact, more precisely, that $T\leq N_P(Q_{\delta}).$  The local pointed group $P_{\gamma}$  forces the existence of a local point $\mu \subseteq (\Od H)^{N_P(Q_{\delta})}$ with the property
\[N_P(Q_{\delta})_{\mu}\leq P_{\gamma}.\] The inclusion $Q\leq N_P(Q_{\delta})\leq N_G(Q_{\delta})$ shows that we may find, if necessary, some $G$-conjugate of $Q_{\delta }$ satisfying
\[Q_{\delta}\leq N_P(Q_{\delta})_{\mu}\leq P_{\gamma}\]
and
\[Q_{\delta}\leq N_P(Q_{\delta})_{\mu}\leq N_G(Q_{\delta})_{\beta},\] since $\delta$ determines $\beta.$

Now, the defect group $T$ of $\beta$ verifies $T\leq N_P(Q_{\delta}).$ The local point $\mu$ determines a local point $\bar{\mu}$ of $T$ on $\Od H$ with $T_{\bar{\mu}}\leq N_P(Q_{\delta})_{\mu}.$ The maximality of $T_{\bar{\mu}}$ forces the equality $T_{\bar{\mu}}=N_P(Q_{\delta})_{\mu}.$ With the help of the  commutative diagram
\[\begin{xy} \xymatrix{ kC_H(Q)^T \ar[rr]^{\Br^{kC_H(Q)}_T} &&kC_H(T)
\\  \mathcal{O}H^T\ar[u]_{\Br^{\mathcal{O}H}_Q}\ar[urr]_{\Br_T^{\mathcal{O}H}}    } \end{xy}\]
 one can easily check that $T_{\Br_Q(\mu)}$ is a defect pointed group of $N_{G}(Q_{\delta})_{b_{\delta}}.$

Let $T'_{\mu'}$ denote the local pointed group corresponding to $T_{\mu}.$  The embedding (\ref{idquotientdembbedding}) gives $Q'_{\delta'}\leq T'_{\mu'}\leq P'_{\gamma'},$ $Q'_{\delta'}\leq T'_{\mu'}\leq N_{G'}(Q'_{\delta'})_{\beta'},$ where $\beta'$ is the unique point determined by $\delta'.$ Further, since $P\simeq P'$ we have $T'=N_{P'}(Q_{\delta'}).$  We claim that $T'_{\Br_{Q'}(\mu')}$ is a defect pointed group of $N_G(Q'_{\delta'})_{b_{\delta'}}.$ Indeed, if $T'_{\mu'}$ is not maximal then there is a local pointed group $\bar{T}'_{\bar{\mu}'}$ with  $T'_{\mu'}\leq \bar{T}'_{\bar{\mu}'}\leq N_{G'}(Q'_{\delta'})_{\beta'}.$ By repeating the argument in the case of $\beta'=b'\beta'$ instead of $\beta=\beta b$, we  get $\bar{T}'\leq T',$ hence $T'_{\mu'}$ is a defect pointed group of $ N_{G'}(Q'_{\delta'})_{\beta'}.$

Since $b$ is basic Morita equivalent to $b',$ by \cite[Section 4 and Theorem 3.10]{CM} there is a ${\bar G}$-graded $P$-interior algebra embedding
\[\tag{4}\label{gradedembbeding}(\Od G)_{\gamma}\to S\otimes (\Od G')_{\gamma'}\] whose identity component is the embedding (\ref{idembbedding}), and it restricts to
\[((\Od H)_{\gamma})^T\to S^{\ddot{T}}\otimes ((\Od H')_{\gamma'})^{T'}.\] We obtain the commutative diagram
 \[\begin{xy} \xymatrix{ ((\Od H)_{\gamma})^T\ar[d]_{\Br^{(\Od H)_{\gamma}}_T}\ar[r]^{f} &S^{\ddot{T}}\otimes ((\Od H')_{\gamma'})^{T'}\ar[d]^{\Br_T^{S\otimes (\Od H')_{\gamma'}}}\\
  ((\Od H)_{\gamma})(T)\ar[r]^{\bar{f}}&S(\ddot{T})\otimes ((\Od H')_{\gamma'})(T') } \end{xy}.\]
Now the local point $\mu$ of $T$ on $(\Od H)_{\gamma}$ verifies
\[(\bar{f}\circ \Br^{(\Od H)_{\gamma}}_{T})(\mu)=(\Br_T^{S\otimes (\Od H')_{\gamma'}}\circ f)(\mu),\]
which means that
\[\bar{1}\otimes \Br^{(\Od H')_{\gamma'}}_{T'}(\mu ')=\Br_T^{S\otimes (\Od H')_{\gamma'}}(1\otimes \mu')=(\Br_T^{S\otimes (\Od H')_{\gamma'}}\circ f)(\mu),\] which implies that $(1\otimes \mu')f(\mu ')\neq 0.$ The algebra homomorphism
\[((\Od H')_{\gamma'})^{T'}\to (S\otimes (\Od H')_{\gamma'})^{T'},\] sending $a$ to $1\otimes a$ maps the point $\mu'$ to $1\otimes \mu',$ which is a $(S\otimes (\Od H')_{\gamma'})^\times$-conjugacy class of a idempotent that is not primitive in general. We obtain the equalities
\[ (1\otimes \mu')f(\mu ')=f(\mu ')(1\otimes \mu')=f(\mu ')\] in $(S\otimes (\Od H')_{\gamma'})^{T'}.$ All the above facts imply that
\[f(l)(S\otimes (\Od G')_{\gamma'})f(l)=f(l)(S\otimes (\Od G')_{\mu'})f(l),\] for some idempotent $l\in \mu.$
Embeddings (\ref{gradedembbeding}), (\ref{idembbedding}) and the above equality provide the ${\bar G}$-graded $T$-interior algebra embedding
\[\tag{5}\label{theneededembbeding}(\Od G)_{\mu}\to S\otimes (\Od G')_{\mu'}\] given by the  composition
\[(\Od G)_{\mu}\simeq ((\Od G)_{\gamma'})_{\mu}\to f(l)(S\otimes (\Od G')_{\gamma'})f(l)=f(l)(S\otimes (\Od G')_{\mu'})f(l)\to S\otimes (\Od G')_{\mu'}\]
of $T$-interior algebra embeddings.

Let $\widehat{\Br_Q(\mu)}$ be the point of $T$,  determined by $\Br_Q(\mu)$, on the extended Brauer quotient $\bar{N}_{\Od H}^E(Q)$, and similarly, let $\widehat{\Br_{Q'}(\mu')}$ be the point of $T'$ on $\bar{N}_{\Od H'}^{E'}(Q')$ determined by $\Br_{Q'}(\mu'),$ according to \cite[Theorem 3.1]{CT}. At this point, \cite[Corollary 3.7]{PuZh}, \cite[Proposition 2.5]{CT} and \cite[Theorem 4.4]{CM2} give the $E$-graded $T$-interior algebra isomorphisms
\[\tag{6}\label{quotientisomorphism}\bar{N}_{(\Od G)_{\mu}}^E(Q)\simeq (\bar{N}_{\Od G}^E(Q))_{\widehat{\Br_Q(\mu)}}\simeq (kN_G(Q_{\delta}))_{\widehat{\Br_Q(\mu)}},\] and respectively  the $E'$-graded $T'$-interior algebra isomorphisms
\[\tag{7}\label{quotientisomorphism'}\bar{N}_{(\Od G')_{\mu}}^{E'}(Q')\simeq \bar{N}_{\Od G'}^{E'}(Q'))_{\widehat{\Br_{Q'}(\mu')}}\simeq (kN_{G'}(Q'_{\delta'}))_{\widehat{\Br_{Q'}(\mu')}}.\]

Finally, we use (\ref{quotientisomorphism}), (\ref{quotientisomorphism'}), and we apply \cite[Proposition 3.8]{PuZh} and \cite[Theorem 5.1]{CM2} to the embedding (\ref{gradedembbeding}) to obtain the $E\simeq E'$-graded $T$-interior algebra embedding
\[(kN_G(Q_{\delta}))_{\widehat{\Br_Q(\mu)}}\to S(\ddot{Q})\otimes (kN_{G'}(Q'_{\delta'}))_{\widehat{\Br_{Q'}(\mu')}}.\]
Note that here we have that  $S(\ddot Q)=\End_{\Od}(\ddot{N}_{\ddot{Q}})$, where $\ddot{N}_{\ddot{Q}}$ is the unique endo-permutation $k\ddot{T}$-module determined by $\ddot{N}$ (where recall that $\ddot{T}\simeq T\simeq T'$).

Consider, as in \cite[6.8]{CM}, the natural maps $\omega:N_G(Q_\delta)\to E$ and $\omega':N_{G'}(Q'_{\delta'})\to E'$, and the subgroup (see \ref{s:tensor} and \ref{s:tensorpt})
\[N_{\ddot{G}}(\ddot{Q}_{\delta\times \delta'})=\{(g,g')\in N_G(Q_\delta)\times N_{G'}(Q'_{\delta'}) \mid \omega(g)=\omega'(g') \}.\]
Then, by \cite[Theorem 3.10 and 3.11]{CM}, we deduce that there is an  indecomposable $kN_{\ddot{G}}(\ddot{Q}_{\delta\times\delta'})$-summand $Y$ of $\Ind_{\ddot{T}}^{N_{\ddot{G}}(\ddot{Q}_{\delta\times\delta'})} (\ddot{N}_{\ddot{Q}})$ such that the $E$-graded bimodule
\[\ddot{Y}:=\Ind_{N_{\ddot{G}}(\ddot{Q}_{\delta\times\delta'})}^{N_G(Q_{\delta})\times N_{G'}(Q'_{\delta'})}(Y)\] determines a basic $E$-graded Morita equivalence between the block extensions $kN_G(Q_{\delta})b_{\delta}$ and $kN_{G'}(Q'_{\delta'})b_{\delta'}.$
\end{proof}

From Theorem \ref{thm6.1}  we deduce our final result.

\begin{cor} \label{c:final} With the above notations and assumptions, there is an isomorphism
\[\overline{\mathcal{F}}_A^{\bar G}(Q_{\delta})\simeq \overline{\mathcal{F}}_{A'}^{\bar G}(Q'_{\delta'})\]
of $F_A^{\bar G}(Q_{\delta})\simeq F_{A'}^{\bar G}(Q'_{\delta'})$-graded algebras.
\end{cor}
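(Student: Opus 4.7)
The strategy is to reduce the statement to comparing the local Clifford extensions $\overline{\mathcal{E}}(Q)$ on the two sides, and then to exploit the $E\simeq E'$-graded basic Morita equivalence produced by Theorem \ref{thm6.1}(2).

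First I would set up the reduction using the structural results of Sections \ref{sec5} and \ref{sec6}, applied with $P_\gamma$ replaced by $Q_\delta$. By the Corollary following Proposition \ref{properonsisofrond} and by Proposition \ref{prop5.1}, we obtain isomorphisms of $F\simeq E$-graded $k$-algebras
\[\overline{\mathcal{F}}_A^{\bar G}(Q_\delta)\simeq \overline{\mathcal{E}}_G^{\bar G}(Q_\delta)\simeq \overline{\mathcal{E}}_G^{\bar G}(Q_\delta)(Q),\]
and the analogous chain on the primed side. The group isomorphism $F_A^{\bar G}(Q_\delta)\simeq F_{A'}^{\bar G}(Q'_{\delta'})$ of Theorem \ref{thm6.1}(1), combined with Proposition \ref{propEisoF}, identifies the grading groups $E\simeq E'$. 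It therefore suffices to exhibit an $E\simeq E'$-graded isomorphism $\overline{\mathcal{E}}_G^{\bar G}(Q_\delta)(Q)\simeq \overline{\mathcal{E}}_{G'}^{\bar G}(Q'_{\delta'})(Q')$.

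Next I would identify each side concretely. By \ref{barcalE(P)}, the $E$-graded algebra $\overline{\mathcal{E}}_G^{\bar G}(Q_\delta)(Q)$ is, up to isomorphism, the residual Clifford extension, with respect to the $E$-graded block algebra $kN_G(Q_\delta)b_\delta\simeq B(Q)b_\delta *E$, of the simple module $V_\delta$ over $kC_H(Q)b_\delta\simeq B(Q)b_\delta$; similarly for the primed side. Now Theorem \ref{thm6.1}(2) asserts exactly that $kN_G(Q_\delta)b_\delta$ and $kN_{G'}(Q'_{\delta'})b'_{\delta'}$ are $E\simeq E'$-graded basic Morita equivalent, via the graded bimodule $\ddot Y$ constructed in the proof of Theorem \ref{thm6.1}. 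The identity component of this equivalence restricts to an ordinary Morita equivalence between $kC_H(Q)b_\delta$ and $kC_{H'}(Q')b'_{\delta'}$ which, by the local-point correspondence $\delta\leftrightarrow\delta'$ obtained from the embedding (\ref{idquotientdembbedding}), sends $V_\delta$ to an isomorphic copy of $V_{\delta'}$.

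It then remains to invoke the principle that an $E\simeq E'$-graded (basic) Morita equivalence preserves the residual Clifford extension of corresponding simple modules. Concretely, the functor $\ddot Y\otimes_{kN_G(Q_\delta)b_\delta}-$ carries the projective cover of $V_\delta$ to the projective cover of $V_{\delta'}$, commutes with the formation of $E$-graded induced modules, and intertwines the graded Jacobson radicals of the corresponding graded endomorphism algebras; passing to the quotients yields the required $E\simeq E'$-graded isomorphism. The main obstacle is making this last step rigorous, since we must track both the grading and the action of $E$ through the Morita equivalence. The cleanest way I would do this is to combine the explicit form of $\ddot Y$ from the source-algebra construction in the proof of Theorem \ref{thm6.1} with the compatibility results \cite[Theorem 4.4]{CM2} and \cite[Proposition 3.8]{PuZh} already used there, so that the graded projective covers match after applying $\ddot Y\otimes-$; alternatively one can use Proposition \ref{p:embedding} and Proposition \ref{p:tensor} to transport the graded endomorphism algebra across the source algebra embedding (\ref{theneededembbeding}) built in the proof of Theorem \ref{thm6.1}, arriving at the desired isomorphism of $E\simeq E'$-graded $k$-algebras.
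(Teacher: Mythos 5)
Your proposal follows essentially the same route as the paper: reduce via Proposition \ref{properonsisofrond} and Proposition \ref{prop5.1} to an $E\simeq E'$-graded isomorphism $\overline{\mathcal{E}}(Q)\simeq\overline{\mathcal{E}'}(Q')$, then deduce it from the graded Morita equivalence of Theorem \ref{thm6.1}(2) together with the correspondence $V_\delta\leftrightarrow V'_{\delta'}$. The ``principle'' you flag as the main obstacle --- that a group graded Morita equivalence preserves residual Clifford extensions of corresponding simple modules --- is exactly what the paper disposes of by citing \cite[Theorem 5.1.18]{Ma}, so no further argument is needed there.
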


\begin{proof} By Proposition \ref{properonsisofrond} and Proposition \ref{prop5.1} it is enough to see that
\[\overline{\mathcal{E}}(Q)\simeq \overline{\mathcal{E'}}(Q') \]
as $E\simeq E'$-graded algebras. Indeed, the $E$-graded Morita equivalence of Theorem \ref{thm6.1} gives, in particular, a Morita equivalence between the blocks $kC_H(Q)b_\delta$ and $kC_{H'}(Q)b'_{\delta'}$, such that the simple $kC_H(Q)b_\delta$ -module $V_\delta$ corresponds to the simple $kC_{H'}(Q)b'_{\delta'}$-module $V'_{\delta'}$. By \cite[Theorem 5.1.18]{Ma}, the $E$-graded Morita equivalence preserves Clifford extensions, hence that statement follows.
\end{proof}

\begin{rem} In the particular case $\bar G=1$, the isomorphism $\overline{\mathcal{F}}_A(Q_{\delta})\simeq \overline{\mathcal{F}}_{A'}(Q'_{\delta'})$ is stated without proof in \cite[7.6.5]{PuLo}. Note that we are only able to prove this isomorphism by using the local Morita equivalence of \ref{thm6.1}.2), which generalizes \cite[Theorem 1.4]{PuZh}, which in turn, is a generalization of \cite[7.7.4]{PuLo}.

\end{rem}

%%%%%%%%%%%%%%%%%%%%%%%%%%%%%%%%%%%%%%%%%%%%%%%%%%%%%%%%%%%%

\end{document}